\newtheorem{thm}{Theorem}[section]
\newtheorem{definition}[thm]{Definition}
\newtheorem{lem}[thm]{Lemma}
\newtheorem{prop}[thm]{Proposition}
\newcommand{\N}{\mathbb{N}}
\newcommand{\PP}{\mathbb{P}}
\newcommand{\E}[1]{\mathbb{E}\left[#1\right]}
\newcommand{\Ee}[1]{\mathbb{E}_\varepsilon\left[#1\right]}
\newcommand{\Prob}[1]{\mathbb{P}\left(#1\right)}
\newcommand{\Probe}[1]{\mathbb{P}_\varepsilon\left(#1\right)}
\theoremstyle{remark}
\newtheorem{rmk}[thm]{Remark}
\numberwithin{equation}{section}
\title[LDP Stochastic Equations non-Lipschitz drift ]{Large Deviations for Stochastic equations in Hilbert Spaces with non-Lipschitz drift}
\author[U. Pappalettera]{Umberto Pappalettera}
  \address{Scuola Normale Superiore, Piazza dei Cavalieri, 7, 56126 Pisa, Italia}
  \email{\href{mailto:umberto.pappalettera@sns.it}{umberto.pappalettera@sns.it}}
\keywords{Large Deviations, Freidlin-Wentzell Theorem, Abstract SDEs, Cylindrical Wiener process}
\date\today
\begin{document}

\begin{abstract}
We prove a Freidlin-Wentzell result for stochastic differential equations in infinite-dimensional Hilbert spaces perturbed by a cylindrical Wiener process. We do not assume the drift to be Lipschitz continuous, but only continuous with at most linear growth. Our result applies, in particular, to a large class of nonlinear fractional diffusion equations perturbed by a space-time white noise.
\end{abstract}

\maketitle

\section{Introduction}

We investigate Large Deviations for the family, indexed by $\varepsilon>0$, of stochastic differential equations in a infinite dimensional separable Hilbert space $H$ 
\begin{equation} \label{eq:SDE}
dX^{\varepsilon}_t = \left( AX^{\varepsilon}_t + B(X^{\varepsilon}_t) \right) dt + \varepsilon dW_t, \quad t \in [0,T],
\end{equation}
with initial condition $X^{\varepsilon}_0=x_0$.
We assume that the linear operator $A:D(A) \subseteq H \to H$ is self-adjoint, with eigenvalues $0 > -\lambda_0 \geq \dots \geq -\lambda_n \geq \dots $ and such that $(-A)^{-1+\delta}$ is trace class for some $\delta \in (0,1)$, and the nonlinear term $B:H \to H$ is continuous with at most linear growth, namely $B \in C(H,H)$ and there exist finite constants $a,b$ such that $\|B(x)\|_H \leq a +b\|x\|_H$ for every $x \in H$. 
$W$ is a cylindrical Wiener process on $H$. 
As for the initial condition $x_0$, we assume $x_0 \in D((-A)^{\delta/2})$, see \autoref{sec:notation} for details. The condition on the initial datum is important in proving the so called \emph{exponential tightness} for solutions of (\ref{eq:SDE}), see \autoref{def:exptight} and \autoref{lem:exptight} below. 

In the present paper we interpret (\ref{eq:SDE}) in (probabilistically weak) mild formulation, see \cite[Theorem 13]{DPFlPrRo13} for well-posedness of the equation. 
Hence for every $\varepsilon>0$ there is a stochastic basis $(\Omega_\varepsilon, (\mathcal{F}^\varepsilon_t), \PP_\varepsilon)$ which supports a cylindrical Wiener process $W^\varepsilon$ and a process $X^\varepsilon$ solution of 
\begin{equation*}
X^{\varepsilon}_t = e^{tA}x_0 + \int_0^t e^{(t-s)A} B(X^{\varepsilon}_s)ds + \varepsilon \int_0^t e^{(t-s)A}dW^\varepsilon_s, \quad t \in [0,T].
\end{equation*} 
To ease notation, we drop the apex on $W^\varepsilon$ in the following. For brevity we denote by (H) the assumption:
\begin{itemize}
\item \textbf{(H)}: $A:D(A) \subseteq H \to H$ self-adjoint, with eigenvalues $0 > -\lambda_0 \geq \dots \geq \lambda_n \geq \dots $ and such that $(-A)^{-1+\delta}$ is trace class for some $\delta \in (0,1)$.
\end{itemize}

Our main result is the following:
\begin{thm} \label{thm:LDP}
Assume {(H)} and $B\in C(H,H)$ with at most linear growth. Let $\alpha \in (0,\delta/2)$. Then there exists $T>0$ sufficiently small such that a Large Deviation Principle on $C([0,T],D(-A)^{\alpha})$ holds for the family of laws $(\mathcal{L}(X^\varepsilon))_{\varepsilon>0}$, with rate $\varepsilon^2$ and action functional given by (\ref{eq:action2}). Moreover, if $B\in C_b(H,H)$, the thesis holds for every choice of $T < \infty$.
\end{thm}

Our result generalizes the celebrated theorem of Freidlin and Wentzell \cite{FrW12} to the infinite dimensional setting. As far as we know, there is no abstract result in infinite dimension covering the case of non-Lipschitz drift. On the other hand, the literature concerning Large Deviation Principles for particular equations with irregular drift is very rich; we mention among the others \cite{CW99,CeDe19,CeRo04,CM10,DM10}. 
However, in the aforementioned works it usually happens that the (deterministic) limit equation, obtained imposing $\varepsilon=0$, is well posed, which plays a central role in the proof of Large Deviation estimates. 
As a consequence, usually the action functional vanishes only at one point, that is the unique solution to the deterministic dynamics. 
On the contrary, in our work we do not necessarily have uniqueness for the unperturbed equation and so our action functional vanishes on a wide set, made of \emph{all} solutions to the deterministic equation. 
The same phenomenon also appears in \cite{Herr01,GHR01} for finite dimensional SDEs and in \cite{Ma10} for a class of SPDEs. 
In these works the authors study a second Large Deviation Principle, which selects, among all the solution of the unperturbed dynamics, some special solutions with additional properties. 
We do not treat this difficult problem in the abstract case, being it dependent of the particular equation under investigation, and we limit ourselves to establish the first Large Deviation Principle.

Our method, inspired by \cite{He01}, consists in the approximation of the nonlinear drift $B$ with a sequence of Lipschitz and bounded drifts, cfr. \autoref{thm:approx}. The approximation itself relies on the Kirszbraun extension Theorem. Once such approximation is given, one can prove Large Deviation estimates for the solutions of (\ref{eq:SDE}) using an auxiliary equation, with a more regular nonlinearity, for which Large Deviation estimates are easier to obtain. 

The paper is organized as follows. In \autoref{sec:LD} we recall definitions and first results concerning Large Deviations; in \autoref{sec:notation} we give a concise presentation of classical Large Deviation results for SDEs with regular coefficients; in \autoref{sec:approx} we prove \autoref{thm:approx}, which is the key to approximate the solution of (\ref{eq:SDE}) with more treatable processes; in \autoref{sec:main} we prove our main result, \autoref{thm:LDP}; in \autoref{sec:applications} we discuss some application of our results.

\section{Large Deviations} \label{sec:LD} 
For the convenience of the reader, here we give basic definitions about Large Deviations. Being only interested in Large Deviations for (\ref{eq:SDE}), we do not work in settings more general than we need and we refer to \cite{DeZe09,FrW12} for a specific discussion on the topic.
Let $(\mathcal{X},d)$ be a Polish space and let $(\mu^\varepsilon)_{\varepsilon>0}$ be a family of Borel probability measures on $(\mathcal{X},d)$. Suppose we are given the following two objects:
\begin{itemize}
\item a positive sequence $a_\varepsilon \to 0$ as $\varepsilon \to 0$, called \emph{rate};
\item a map $S : \mathcal{X} \to [0,\infty]$ with compact sublevels, called \emph{action functional}.
\end{itemize}

\begin{definition} \label{def:LDP}
We say that a Large Deviation Principle (in the following LDP) holds for the family $(\mu^\varepsilon)_{\varepsilon>0}$ with rate $a_\varepsilon$ and action functional $S$ if the following bounds are verified:
\begin{itemize}
\item \textbf{Lower Bound}: for any open $A \subseteq \mathcal{X}$
\begin{equation} \label{eq:LowerBound}
\liminf_{\varepsilon \to 0} a_\varepsilon \log \mu^\varepsilon (A) \geq -\inf_{x \in A} S(x);
\end{equation}
\item \textbf{Upper Bound}: for any closed $C \subseteq \mathcal{X}$
\begin{equation} \label{eq:UpperBound}
\limsup_{\varepsilon \to 0} a_\varepsilon \log \mu^\varepsilon (C) \leq -\inf_{x \in C} S(x).
\end{equation}
\end{itemize}
\end{definition}

\cite[Theorem 3.3]{FrW12} gives an useful criterion (that is actually an equivalent definition of LDP) to check lower bound, namely
\begin{itemize}
\item \textbf{Lower Bound II}: for every $\delta>0$, $\gamma>0$ and $x \in \mathcal{X}$ there exists $\varepsilon_0>0$ such that for every $\varepsilon>\varepsilon_0$
\begin{equation} \label{eq:LowerBound'}
\mu^\varepsilon \left( y \in \mathcal{X} : d(y,x)<\delta  \right) \geq \exp \left( - a_\varepsilon^{-1} [S(x)+\gamma] \right).
\end{equation}
\end{itemize}

The following property, called \emph{exponential tightness}, plays a central role throughout the paper.
\begin{definition} \label{def:exptight}
We say that the family $(\mu^\varepsilon)_{\varepsilon>0}$ is exponentially tight in $(\mathcal{X},d_{\mathcal{X}})$  (with rate $a_\varepsilon$) if for every $M < \infty$ there exist a compact set $K_M \subseteq \mathcal{X}$ such that 
\begin{equation*}
\limsup_{\varepsilon \to 0} a_\varepsilon \log \mu^\varepsilon \left( K_M^c \right) \leq -M.
\end{equation*}
\end{definition}

We conclude this section with two general results, called \emph{contraction principles}, which allow to transfer LDPs from one metric space to another.

\begin{thm}[Contraction Principle]\cite[Theorem 4.2.1]{DeZe09} \label{thm:CP} 
Let the family $(\mu^\varepsilon)_{\varepsilon>0}$ satisfy a LDP on a Polish space $(\mathcal{X},d_{\mathcal{X}})$ with rate $a_\varepsilon$ and action functional $S^\mathcal{X}$ and let $f$ be a continuous mapping from $(\mathcal{X},d_{\mathcal{X}})$ to another Polish space $(\mathcal{Y},d_{\mathcal{Y}})$. 
Define the push-forward measures $\nu^\varepsilon \coloneqq f_*(\mu^\varepsilon)$. Then the family $(\nu^\varepsilon)_{\varepsilon>0}$ satisfies a LDP on $(\mathcal{Y},d_{\mathcal{Y}})$ with rate $a_\varepsilon$ and action functional
\begin{equation*}
S^{\mathcal{Y}}(y) \coloneqq \inf_{x \in f^{-1}(y)}S^{\mathcal{X}}(x),
\end{equation*} 
the infimum over the empty set being equal to $+\infty$.
\end{thm}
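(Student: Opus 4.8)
The plan is to reduce everything to the defining bounds for $(\mu^\varepsilon)$ through the elementary change of variables $\nu^\varepsilon(E) = \mu^\varepsilon(f^{-1}(E))$, valid for every Borel $E \subseteq \mathcal{Y}$ by definition of the push-forward. The whole argument rests on two observations: first, that continuity of $f$ makes the preimage of an open (resp. closed) subset of $\mathcal{Y}$ an open (resp. closed) subset of $\mathcal{X}$; and second, the set-theoretic identity
\begin{equation*}
\inf_{x \in f^{-1}(E)} S^{\mathcal{X}}(x) = \inf_{y \in E} \inf_{x \in f^{-1}(y)} S^{\mathcal{X}}(x) = \inf_{y \in E} S^{\mathcal{Y}}(y),
\end{equation*}
valid for any $E \subseteq \mathcal{Y}$ under the convention $\inf_{\emptyset} = +\infty$, since the family $\{f^{-1}(y)\}_{y \in E}$ partitions $f^{-1}(E)$ (empty fibres contributing $+\infty$, consistently with $S^{\mathcal{Y}}(y) = +\infty$ there).

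Before verifying the two bounds I must check that $S^{\mathcal{Y}}$ is a legitimate action functional in the sense of \autoref{def:LDP}, i.e. that its sublevel sets are compact; here lies the only genuine subtlety. Writing $\Phi_M := \{x \in \mathcal{X} : S^{\mathcal{X}}(x) \leq M\}$, which is compact by hypothesis, I would first show that the infimum defining $S^{\mathcal{Y}}(y)$ is attained whenever it is finite: taking a minimizing sequence $x_n \in f^{-1}(y)$, for large $n$ one has $x_n \in \Phi_{S^{\mathcal{Y}}(y)+1}$, so by compactness a subsequence converges to some $x^\ast$; continuity of $f$ gives $f(x^\ast) = y$, while lower semicontinuity of $S^{\mathcal{X}}$ (a consequence of its compact, hence closed, sublevels) forces $S^{\mathcal{X}}(x^\ast) = S^{\mathcal{Y}}(y)$. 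With attainment in hand, the identity $\{y : S^{\mathcal{Y}}(y) \leq M\} = f(\Phi_M)$ follows: the inclusion $\supseteq$ is immediate from the definition of $S^{\mathcal{Y}}$, and $\subseteq$ uses the attained minimizer. Since $f(\Phi_M)$ is the continuous image of a compact set, it is compact, proving that $S^{\mathcal{Y}}$ has compact sublevels.

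The two bounds are then immediate. For the upper bound, given a closed $C \subseteq \mathcal{Y}$ the set $f^{-1}(C)$ is closed, so applying (\ref{eq:UpperBound}) for $(\mu^\varepsilon)$ together with the infimum identity gives
\begin{equation*}
\limsup_{\varepsilon \to 0} a_\varepsilon \log \nu^\varepsilon(C) = \limsup_{\varepsilon \to 0} a_\varepsilon \log \mu^\varepsilon(f^{-1}(C)) \leq - \inf_{x \in f^{-1}(C)} S^{\mathcal{X}}(x) = - \inf_{y \in C} S^{\mathcal{Y}}(y).
\end{equation*}
Symmetrically, for open $A \subseteq \mathcal{Y}$ the set $f^{-1}(A)$ is open and (\ref{eq:LowerBound}) yields $\liminf_{\varepsilon \to 0} a_\varepsilon \log \nu^\varepsilon(A) \geq -\inf_{x \in f^{-1}(A)} S^{\mathcal{X}}(x) = -\inf_{y \in A} S^{\mathcal{Y}}(y)$. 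The main obstacle, as indicated, is not the bounds but the verification that $S^{\mathcal{Y}}$ is a good rate function with compact sublevels; everything else is bookkeeping built on the push-forward identity.
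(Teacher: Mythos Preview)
Your argument is correct and is essentially the standard proof of the contraction principle as given in the cited reference \cite[Theorem 4.2.1]{DeZe09}. Note, however, that the paper does not supply its own proof of this statement: \autoref{thm:CP} is merely quoted from Dembo--Zeitouni as a tool, so there is no in-paper proof to compare against. What you have written matches the textbook approach in structure and detail, including the only nontrivial point (goodness of $S^{\mathcal{Y}}$ via attainment of the infimum and the identification $\{S^{\mathcal{Y}}\le M\}=f(\Phi_M)$).
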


\begin{thm}[Inverse Contraction Principle]\cite[Theorem 4.2.4]{DeZe09} \label{thm:ICP} 
Let $(\mathcal{X},d_{\mathcal{X}})$,  $(\mathcal{Y},d_{\mathcal{Y}})$ be Polish spaces and let $g:(\mathcal{X},d_{\mathcal{X}})\to(\mathcal{Y},d_{\mathcal{Y}})$ be a continuous injection.
If the family $(\mu^\varepsilon)_{\varepsilon>0}$ is exponentially tight in $(\mathcal{X},d_{\mathcal{X}})$ and the family $(\nu^\varepsilon)_{\varepsilon>0}$, $\nu^\varepsilon \coloneqq g_*(\mu^\varepsilon)$, satisfies a LDP on $(\mathcal{Y},d_{\mathcal{Y}})$ with rate $a_\varepsilon$ and action functional $S^{\mathcal{Y}}$, then the family $(\mu^\varepsilon)_{\varepsilon>0}$ satisfies a LDP on $(\mathcal{X},d_{\mathcal{X}})$ with rate $a_\varepsilon$ and action functional
\begin{equation*}
S^{\mathcal{X}} \coloneqq  S^{\mathcal{Y}} \circ g .
\end{equation*} 
\end{thm}

\section{Notation and preliminaries} \label{sec:notation}

In this section we recall some results concerning Large Deviations for SDEs with regular coefficients, perturbed by a cylindrical Wiener process. The results exposed here are classical, see \cite[Chapter 12]{DPZa14} for a reference. However, we prefer to give the proof of some well-known facts for the sake of presentation, and also because our setting is slightly different from the classical one, see \autoref{rmk:ICP} and \autoref{prop:LDPlip}. 

\subsection{Cylindrical Wiener process}
Let $H$ be a infinite dimensional separable real Hilbert space and let $(e_n)_{n\in\N}$ be a complete orthonormal system of $H$. On $H$ one can consider a cylindrical Wiener process $W$, formally given by the infinite sum
\begin{equation}\label{eq:W}
W_t = \sum_{n \in \N} \beta^n_t e_n,
\end{equation}
where $(\beta^n)_{n\in\N}$ is a sequence of independent standard Brownian motions on a common filtered probability space $(\Omega,(\mathcal{F}_t),\PP)$. It is well known (see \cite{DPZa14}) that the series \eqref{eq:W} converges in $L^2(\Omega,E)$ for some separable real Hilbert space $E$ larger than $H$, with Hilbert-Schmidt embedding $H \hookrightarrow E$. Moreover, by Kolmogorov continuity criterion, the process $W$ has trajectories in $C([0,T],E)$ almost surely.

An interesting feature of $W$ is the following: if one denotes by $\mu$ the law of $W$ on $C([0,T],E)$, then $\mu$ is a symmetric Gaussian measure on $C([0,T],E)$ with reproducing kernel $W^{1,2}_0([0,T],H)$, the space of Sobolev functions vanishing at  zero.
This fact is the key ingredient in proving the next result, which generalizes the celebrated Schilder's Theorem for Brownian motion in the infinite-dimensional setting.

\begin{prop}
The family of laws $(\mathcal{L}(\varepsilon W))_{\varepsilon > 0}$ satisfies a LDP on $C([0,T],E)$ with rate $\varepsilon^2$ and action functional
\begin{equation} \label{eq:LDP}
S_0(\varphi) \coloneqq \frac{1}{2} \|\varphi\|^2_{W^{1,2}_0([0,T],H)} = \frac{1}{2} \int_0^T \|\dot{\varphi}_t\|_H^2 dt,
\end{equation}
if $\varphi \in W^{1,2}_0([0,T],H)$ and $S_0(\varphi)=+\infty$ otherwise.
\end{prop}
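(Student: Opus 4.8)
The plan is to deduce the statement from the general large deviations principle for centered Gaussian measures on a separable Banach space, which is the abstract form of Schilder's theorem. Set $\mathcal{X} \coloneqq C([0,T],E)$, a separable Banach space, and recall that $\mu \coloneqq \mathcal{L}(W)$ is a centered Gaussian Borel probability measure on $\mathcal{X}$: indeed $W$ has continuous $E$-valued trajectories almost surely, so $\mu$ is tight, hence Radon, and Gaussianity and symmetry are immediate from the series \eqref{eq:W}. As recalled above, the reproducing kernel Hilbert space (Cameron--Martin space) of $\mu$ is $\mathcal{H} \coloneqq W^{1,2}_0([0,T],H)$, continuously embedded into $\mathcal{X}$, with $\|\varphi\|_{\mathcal{H}}^2 = \int_0^T \|\dot\varphi_t\|_H^2\, dt$. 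Note also that $\mathcal{L}(\varepsilon W)$ is exactly the push-forward of $\mu$ under multiplication by $\varepsilon$.

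First I would invoke the abstract result (see e.g. \cite[Chapter 12]{DPZa14} or \cite{DeZe09}): for a centered Gaussian Radon measure $\mu$ on a separable Banach space $\mathcal{X}$ with Cameron--Martin space $\mathcal{H}$, the family of laws of $\varepsilon X$ with $X \sim \mu$ satisfies an LDP on $\mathcal{X}$ with rate $\varepsilon^2$ and rate function with compact sublevels $x \mapsto \tfrac{1}{2}\|x\|_{\mathcal{H}}^2$ if $x \in \mathcal{H}$ and $+\infty$ otherwise. Since $\tfrac{1}{2}\|\varphi\|_{\mathcal{H}}^2 = \tfrac{1}{2}\int_0^T \|\dot\varphi_t\|_H^2\, dt = S_0(\varphi)$ for $\varphi \in W^{1,2}_0([0,T],H)$, this is precisely the asserted LDP, and the convention $a_\varepsilon \log \mu^\varepsilon$ with $a_\varepsilon = \varepsilon^2$ of \autoref{def:LDP} matches the statement of the abstract theorem.

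It then remains to observe that $S_0$ is indeed an action functional in the sense of \autoref{sec:LD}, i.e. it has compact sublevels; this is contained in the abstract statement but is also elementary to check directly. If $S_0(\varphi) \le M$, then $\varphi_t = \int_0^t \dot\varphi_s\, ds$ with $\|\dot\varphi\|_{L^2([0,T],H)} \le \sqrt{2M}$, so by Cauchy--Schwarz the family $\{\varphi : S_0(\varphi) \le M\}$ is uniformly bounded in $H$ and equi-$1/2$-Hölder continuous as $H$-valued maps, hence also as $E$-valued maps via the continuous embedding $H \hookrightarrow E$; since this embedding is Hilbert--Schmidt, in particular compact, the values $\varphi_t$ lie in a fixed compact subset of $E$, and Arzel\`a--Ascoli gives relative compactness of the sublevel in $C([0,T],E)$. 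Closedness follows from the lower semicontinuity of $S_0$, itself a consequence of weak compactness of bounded sets in $L^2([0,T],H)$ and lower semicontinuity of the $L^2$ norm under weak convergence.

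The main obstacle is really the identification of the Cameron--Martin space of $\mu$ on $C([0,T],E)$ with $W^{1,2}_0([0,T],H)$ and the stated norm, which we take here as a classical fact about cylindrical Wiener processes; once available, the result reduces to the abstract Gaussian LDP. Should a self-contained proof be preferred, the difficulty would instead be the passage from the finite-dimensional Schilder theorem for $\R^n$-valued Brownian motion to the infinite-dimensional statement, which requires exponential tightness of $(\mathcal{L}(\varepsilon W))_{\varepsilon>0}$ in $C([0,T],E)$ (in the sense of \autoref{def:exptight}) together with an exponential approximation or projective limit argument to combine the finite-dimensional estimates.
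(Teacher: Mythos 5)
Your proposal is correct and follows essentially the same route as the paper: invoking the abstract large deviations principle for centered Gaussian measures (Theorem 12.7 in Da Prato--Zabczyk) together with the identification of the reproducing kernel $W^{1,2}_0([0,T],H)$. Your Arzel\`a--Ascoli argument for compactness of the sublevels is just an explicit version of the paper's one-line observation that the embedding $W^{1,2}_0([0,T],H)\hookrightarrow C([0,T],E)$ is compact.
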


\begin{proof}
It follows from \cite[Theorem 12.7]{DPZa14}. Notice that since the continuous embedding $W^{1,2}_0([0,T],H) \hookrightarrow C([0,T],E)$ is compact, the function $S_0$ has compact sublevels and is indeed an action functional. See also \cite{AIP13} for the same result in the stronger topology of H\"older continuous functions.
\end{proof}


\subsection{Stochastic convolution}
Let $W$ be a cylindrical Wiener process on $H$, defined on some filtered probability space $(\Omega,(\mathcal{F}_t),\PP)$. Define the stochastic convolution
\begin{equation*}
W_A(t) \coloneqq \int_0^t e^{(t-s)A}dW_s,
\end{equation*}
which is the unique (probabilistically strong) mild solution of the stochastic differential equation  
\begin{equation*}
d W_A(t) = A W_A(t) dt + dW_t, \quad t \in [0,T], \quad W_A(0)=0. 
\end{equation*}

It is well known that, under the assumption {(H)}, for every $\alpha \leq \delta/2$ the stochastic convolution $W_A$ has trajectories in $C([0,T],D((-A)^{\alpha})$ a.s., where $D((-A)^{\alpha})$ is a separable Hilbert space with norm $\|x\|_{D((-A)^{\alpha})} \coloneqq \|(-A)^{\alpha} x\|_H$. Moreover, the law of $W_A$ is a symmetric Gaussian measure on $C([0,T],D((-A)^{\alpha}))$ for every $\alpha \leq \delta/2$.

Fix $\alpha \in (0,\delta/2)$. Hereafter we denote 
\begin{equation*}
V \coloneqq D((-A)^{\alpha}), \quad V_0 \coloneqq D((-A)^{\delta/2}),
\end{equation*}
which are separable Hilbert spaces that enjoy compact embeddings $V_0 \hookrightarrow V \hookrightarrow H$.

\begin{prop} \label{prop:LDPWAH}
The family of laws $(\mathcal{L}(\varepsilon W_A))_{\varepsilon > 0}$  satisfies a LDP on $C([0,T],H)$ with rate $\varepsilon^2$ and action functional given by (\ref{eq:actionWA}).
\end{prop}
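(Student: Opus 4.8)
The plan is to obtain the LDP on $C([0,T],H)$ as the image of Schilder's theorem on $C([0,T],E)$ under a suitable contraction map. First I would observe that the stochastic convolution can be written as a deterministic linear functional of the cylindrical Wiener path: formally $W_A(t) = \int_0^t e^{(t-s)A}\,dW_s$, and an integration by parts (valid after a mollification/approximation argument for paths in the Cameron--Martin space) gives, for $\varphi \in W^{1,2}_0([0,T],H)$,
\begin{equation*}
\Gamma(\varphi)(t) \coloneqq \int_0^t e^{(t-s)A}\dot\varphi_s\,ds,
\end{equation*}
which is the unique mild solution of $\dot\psi = A\psi + \dot\varphi$, $\psi(0)=0$. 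I would extend $\Gamma$ to a continuous map $C([0,T],E) \to C([0,T],H)$ — this is the analytic heart of the argument. Concretely, because $(-A)^{-1+\delta}$ is trace class, $e^{tA}$ is Hilbert--Schmidt from $E$ to $H$ (or from $H$ to $D((-A)^\alpha)$) for $t>0$ with an integrable singularity at $t=0$, so the convolution operator $\varphi \mapsto \int_0^t e^{(t-s)A}\,d\varphi_s$ (interpreted via integration by parts against the semigroup, using smoothness of $s\mapsto e^{(t-s)A}$ on $(0,t)$) extends to a bounded linear, hence continuous, map on the Banach space $C([0,T],E)$; moreover its restriction to the Cameron--Martin space coincides with $\Gamma$ above.

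Granting this, I would apply the Contraction Principle, \autoref{thm:CP}, to $f = \Gamma : C([0,T],E) \to C([0,T],H)$ together with the Schilder-type LDP for $(\mathcal{L}(\varepsilon W))_{\varepsilon>0}$ just stated. Since $\varepsilon W_A = \Gamma(\varepsilon W)$ in law, the family $(\mathcal{L}(\varepsilon W_A))_{\varepsilon>0}$ satisfies an LDP on $C([0,T],H)$ with rate $\varepsilon^2$ and action functional
\begin{equation*}
S(\psi) = \inf\left\{ \tfrac12\int_0^T\|\dot\varphi_t\|_H^2\,dt \ :\ \varphi \in W^{1,2}_0([0,T],H),\ \Gamma(\varphi)=\psi \right\},
\end{equation*}
the infimum over the empty set being $+\infty$; this is exactly (\ref{eq:actionWA}), so $\psi$ is in the effective domain iff it is a mild solution of $\dot\psi = A\psi + \dot\varphi$ for some $\varphi$ in the Cameron--Martin space, equivalently iff $\dot\psi - A\psi \in L^2([0,T],H)$ in the appropriate mild sense.

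The main obstacle is the continuity (indeed boundedness) of the convolution map on $C([0,T],E)$: one must control $\sup_t \|\int_0^t e^{(t-s)A}\,d\varphi_s\|_H$ by $\sup_t\|\varphi_t\|_E$, uniformly, which requires estimating the $E\to H$ operator norms of $e^{tA}$ and of its time derivative $Ae^{tA}$ and checking their integrability near $t=0$ — this is where assumption (H), via the trace-class/Hilbert--Schmidt properties of negative powers of $-A$, is used. A secondary technical point is justifying the integration-by-parts identity that matches the extended map $\Gamma$ on the Cameron--Martin space with the stochastic convolution's "skeleton''; this is routine once the semigroup estimates are in hand. I would remark that, alternatively, one could prove the LDP directly on the smaller space $C([0,T],V)$ (cf. \autoref{rmk:ICP}) and transfer it down to $C([0,T],H)$ via the continuous inclusion, but the contraction-principle route above is the most economical.
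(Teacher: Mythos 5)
Your argument hinges on the claim that the convolution map $\varphi \mapsto \int_0^\cdot e^{(\cdot-s)A}\,d\varphi_s$ extends to a \emph{continuous} (bounded linear) map from $C([0,T],E)$ into $C([0,T],H)$, and this claim is false; this is precisely the obstruction that prevents a direct application of the Contraction Principle to Schilder's theorem in the cylindrical setting. A concrete counterexample: let $(e_n)$ be the eigenbasis of $A$ and set $\varphi^n_t \coloneqq (1-e^{-\lambda_n t})\,e_n$, which lies in the Cameron--Martin space $W^{1,2}_0([0,T],H)$, so no extension issue arises. Since the embedding $H\hookrightarrow E$ is Hilbert--Schmidt, $\|e_n\|_E\to 0$, hence $\|\varphi^n\|_{C([0,T],E)}\le\|e_n\|_E\to 0$. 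On the other hand,
\begin{equation*}
\Gamma(\varphi^n)(t)=\int_0^t e^{-\lambda_n(t-s)}\lambda_n e^{-\lambda_n s}\,ds\;e_n=\lambda_n t\,e^{-\lambda_n t}\,e_n,
\qquad \|\Gamma(\varphi^n)\|_{C([0,T],H)}=e^{-1}
\end{equation*}
for all $n$ with $\lambda_n T\ge 1$. So $\Gamma$ is unbounded from $C([0,T],E)$ to $C([0,T],H)$ already on smooth paths, and no continuous extension agreeing with the stochastic convolution can exist. Note also that the trace-class hypothesis on $(-A)^{-1+\delta}$ cannot rescue the estimate: $E$ is an essentially arbitrary space with Hilbert--Schmidt embedding, not tied to $A$, and even choosing $E$ adapted to $A$ (e.g.\ $\|e_n\|_E=\lambda_n^{-(1-\delta)/2}$) the same example gives an operator-norm blow-up of order $\lambda_n^{(1-\delta)/2}$. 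The smoothing that places $W_A$ in $C([0,T],H)$ (indeed in $C([0,T],V_0)$) is a measure-theoretic, Gaussian phenomenon (factorization/Fernique), not a pathwise continuity property of the noise-to-convolution map, and your proposed integration by parts produces the non-integrable singularity $\|Ae^{rA}\|_{E\to H}$ rather than an integrable one.

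Because of this, the paper takes a different route: it projects onto the span of the first $N$ eigenvectors, where $A$ is bounded and the classical finite-dimensional Freidlin--Wentzell theorem applies to $\varepsilon W_A^N=\pi_N\varepsilon W_A$, and then passes to the limit with the projective-limit (Dawson--G\"artner) theorem \cite[Theorem 4.6.1]{DeZe09}, yielding the action functional $S=\sup_N S_N$ of (\ref{eq:actionWA}); the finer topology $C([0,T],V)$ is only reached afterwards, via the exponential estimate of \autoref{lem:fernique} and the \emph{inverse} contraction principle (\autoref{rmk:ICP}). For the same reason, your closing alternative (``prove the LDP directly on $C([0,T],V)$ and push it down'') does not help: any direct argument on the smaller space faces exactly the same lack of continuity of the noise-to-convolution map. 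To repair your write-up you would have to replace the continuity claim by one of the standard substitutes (finite-dimensional projections plus Dawson--G\"artner as in the paper, exponentially good approximations, or the weak-convergence/variational method).
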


\begin{proof}
Let $\pi_N:H \to \pi_N H$ be the projection from $H$ onto the linear span of $e_1, \dots, e_N$, where $(e_n)_{n \in \N}$ is a basis of eigenvectors of $A$, and let $W_A^N \coloneqq \pi_N W_A$. Then $\varepsilon W_A^N$ satisfies the following equation in $C([0,T],\pi_N H)$:
\begin{equation*}
d \varepsilon W_A^N (t) = A \varepsilon W_A^N (t) dt + d \pi_N \varepsilon W_t, \quad t \in [0,T],
\end{equation*}
with initial condition $\varepsilon W_A^N (0)=0$. Since $A:\pi_N H \to \pi_N H$ is Lipschitz, mild solutions coincide with strong solutions, and by classical Freidlin-Wentzell Theorem for every integer $N$ a LDP holds for $(\mathcal{L}(\varepsilon W_A^N))_{\varepsilon>0}$ on $C([0,T],\pi_N H)$, with rate $\varepsilon^2$ and action functional given by
\begin{equation*}
S_N(\varphi) = \frac{1}{2} \int_0^T \| \dot{\varphi}_t  - A \varphi_t \|_H^2 dt, 
\end{equation*}
if $\varphi \in W^{1,2}_0 ([0,T],\pi_N H)$, and $S_N(\varphi)= +\infty $ otherwise. By \cite[Theorem 4.6.1]{DeZe09}, a LDP holds on $C([0,T],H)$ for $(\mathcal{L}(\varepsilon W_A))_{\varepsilon>0}$, with rate $\varepsilon^2$ and action functional 
\begin{equation} \label{eq:actionWA}
S(\varphi) = \sup_{N \in \N} S_N(\pi_N \varphi).
\end{equation}
\end{proof}

\autoref{prop:LDPWAH} above can be refined to obtain a LDP in the space $C([0,T],V)$ thanks to the following lemma. The validity of this stronger LDP will be very useful in the proof of \autoref{thm:LDP}.

\begin{lem} \label{lem:fernique}
Assume {(H)}. Then there exist constants $c,C>0$, depending only on $T,A$, such that for every $\varepsilon, R >0$
\begin{equation*}
\Prob{\| \varepsilon W_A \|_{C([0,T],V_0)} > R} \leq C \exp\left(- c \varepsilon^{-2} R^2 \right).
\end{equation*}
\end{lem}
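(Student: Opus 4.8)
The plan is to exploit the fact that the law of $\varepsilon W_A$ is a centered Gaussian measure on the Banach space $C([0,T],V_0)$ (stated in the paragraph preceding \autoref{prop:LDPWAH}) and then invoke the Fernique theorem, which furnishes exactly this kind of Gaussian tail estimate. More precisely, let $\mu_\varepsilon$ denote the law of $\varepsilon W_A$ on the separable Banach space $\mathcal{B} \coloneqq C([0,T],V_0)$; since scaling a Gaussian measure by $\varepsilon$ keeps it Gaussian, $\mu_\varepsilon$ is centered Gaussian on $\mathcal{B}$ for every $\varepsilon>0$. By Fernique's theorem there exists $\eta_1>0$ such that $\E{\exp(\eta_1 \|W_A\|_{\mathcal{B}}^2)}<\infty$. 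The scaling relation $\|\varepsilon W_A\|_{\mathcal{B}} = \varepsilon \|W_A\|_{\mathcal{B}}$ turns this single integrability statement into a family of tail bounds with the correct $\varepsilon^{-2}$ rate.

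The key steps, in order, are as follows. First I would fix $\eta \coloneqq \eta_1/2$ (any positive number strictly below the Fernique threshold works) and set $C \coloneqq \E{\exp(\eta \|W_A\|_{\mathcal{B}}^2)} < \infty$, a finite constant depending only on $T$ and $A$ through the law of $W_A$. Second, for arbitrary $\varepsilon,R>0$, apply the Chebyshev/Markov inequality to the nonnegative random variable $\exp(\eta \|W_A\|_{\mathcal{B}}^2)$:
\begin{equation*}
\Prob{\|\varepsilon W_A\|_{\mathcal{B}} > R} = \Prob{\|W_A\|_{\mathcal{B}}^2 > \varepsilon^{-2}R^2} = \Prob{e^{\eta\|W_A\|_{\mathcal{B}}^2} > e^{\eta\varepsilon^{-2}R^2}} \leq C\exp\left(-\eta\varepsilon^{-2}R^2\right).
\end{equation*}
Setting $c \coloneqq \eta$ gives the claimed bound with constants depending only on $T$ and $A$. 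Third, I would record why $\|W_A\|_{C([0,T],V_0)}<\infty$ almost surely and why the measure is genuinely Radon/Gaussian on this space, namely by recalling from the stochastic convolution subsection that under (H) the process $W_A$ has trajectories in $C([0,T],D((-A)^{\delta/2}))$ a.s., and that Gaussianity of its law on $C([0,T],V_0)$ is inherited from the Gaussianity of $W$ (the finite-dimensional marginals $\langle W_A(t),v\rangle_{V_0}$ are jointly Gaussian, and $C([0,T],V_0)$ is a separable Banach space, so the measure is Radon Gaussian, which is the hypothesis Fernique's theorem requires).

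The only genuine obstacle is confirming the applicability of Fernique's theorem in this infinite-dimensional setting, i.e., that $\mu_1$ (the law of $W_A$) is a centered Radon Gaussian measure on the separable Banach space $C([0,T],V_0)$ rather than merely a cylindrical measure. This is precisely the content already asserted in the paragraph introducing $V_0$ (``the law of $W_A$ is a symmetric Gaussian measure on $C([0,T],D((-A)^\alpha))$ for every $\alpha \leq \delta/2$''), so once that is taken as given the rest is the entirely routine Markov-inequality argument above; no estimate on the semigroup $e^{tA}$ or the eigenvalues $\lambda_n$ is needed beyond what already guarantees $W_A \in C([0,T],V_0)$. An alternative, if one prefers to avoid quoting Fernique in infinite dimensions, is to run the argument on the finite-dimensional projections $W_A^N=\pi_N W_A$ with constants uniform in $N$ (using that $\operatorname{tr}(-A)^{-1+\delta}<\infty$ controls $\E{\|W_A^N\|_{C([0,T],V_0)}^2}$ uniformly) and pass to the limit via monotone convergence, but the direct Fernique route is cleaner.
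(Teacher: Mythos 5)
Your argument is correct and is essentially identical to the paper's own proof: both apply Fernique's theorem to the symmetric Gaussian law of $W_A$ on $C([0,T],V_0)$ to get $\E{\exp(c\|W_A\|_{C([0,T],V_0)}^2)}<\infty$, then conclude via Markov's inequality and the scaling $\|\varepsilon W_A\| = \varepsilon\|W_A\|$. The only item the paper adds (as a remark, not needed for the statement itself) is that the constant $c$ can be taken non-increasing in $T$, a fact later used to pin down the smallness condition on $T$ in \autoref{thm:LDP}.
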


\begin{proof}
Since the law of $W_A$ is a symmetric Gaussian measure on $C([0,T],V_0)$, by Fernique Theorem there exists $c>0$ such that
\begin{equation*}
\E{\exp\left( c \| W_A \|_{C([0,T],V_0)}^2 \right)} < \infty,
\end{equation*}
therefore by Markov inequality we have
\begin{gather*}
\Prob{\| \varepsilon W_A \|_{C([0,T],V_0)} > R} = \Prob{\exp\left( c \| W_A \|_{C([0,T],V_0)}^2 \right) > \exp\left( c \varepsilon^{-2} R^2 \right)} \\
\leq \exp\left(- c \varepsilon^{-2} R^2 \right) \E{\exp\left( c \| W_A \|_{C([0,T],V_0)}^2 \right)}\leq C \exp\left(- c \varepsilon^{-2} R^2 \right).
\end{gather*}
Notice also that the constant $c$ can be chosen to be decreasing with respect to $T$. This property plays a role in the detection of the constraint on $T$ in \autoref{thm:LDP}.
\end{proof}

\begin{prop} \label{rmk:ICP}
Assume {(H)}. Then the family of laws $(\mathcal{L}(\varepsilon W_A))_{\varepsilon > 0}$  satisfies a LDP on $C([0,T],V)$ with rate $\varepsilon^2$ and action functional given by (\ref{eq:actionWA}).
\end{prop}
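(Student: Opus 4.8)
The plan is to bootstrap the LDP on $C([0,T],H)$ from \autoref{prop:LDPWAH} up to $C([0,T],V)$ by means of the Inverse Contraction Principle, \autoref{thm:ICP}. Take $g\colon C([0,T],V)\hookrightarrow C([0,T],H)$ to be the canonical inclusion; it is a continuous injection because $\|\cdot\|_H\le c\,\|\cdot\|_V$ pointwise in time. The push-forward $g_*(\mathcal L(\varepsilon W_A))$ is nothing but $\mathcal L(\varepsilon W_A)$ regarded as a measure on $C([0,T],H)$, and by \autoref{prop:LDPWAH} this family satisfies a LDP on $C([0,T],H)$ with rate $\varepsilon^2$ and action functional \eqref{eq:actionWA}. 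Hence, once we have verified that $(\mathcal L(\varepsilon W_A))_{\varepsilon>0}$ is exponentially tight in $C([0,T],V)$, \autoref{thm:ICP} will deliver the LDP on $C([0,T],V)$ with action functional $S\circ g$, i.e.\ the restriction of \eqref{eq:actionWA} to $C([0,T],V)$, which is \eqref{eq:actionWA} again. One may also observe that every $\varphi$ with $S(\varphi)<\infty$ is a deterministic convolution $\int_0^{\cdot}e^{(\cdot-s)A}\psi_s\,ds$ of some control $\psi\in L^2([0,T],H)$, hence belongs to $C([0,T],V_0)\subseteq C([0,T],V)$; so the effective domain does not see the difference between the two spaces.

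It remains to prove exponential tightness in $C([0,T],V)$, which I regard as the core of the statement. By Arzelà--Ascoli, a subset of $C([0,T],V)$ is relatively compact precisely when it is equicontinuous as a family of $V$-valued maps and pointwise relatively compact in $V$. \autoref{lem:fernique} controls $\|\varepsilon W_A\|_{C([0,T],V_0)}$ with Gaussian-type tails, and a bound on this norm produces pointwise relative compactness in $V$ through the compact embedding $V_0\hookrightarrow V$; however it says nothing about the modulus of continuity in time, so it has to be complemented by a bound on the time regularity of $W_A$. For the latter I would invoke the standard regularity theory of stochastic convolutions: since $\alpha<\delta/2$, there is $\theta\in(0,1)$ such that $W_A$ has almost surely trajectories in the separable (little-)Hölder space $C^\theta([0,T],V)$, and its law there is a centered Gaussian measure; Fernique's theorem then gives constants $c,C>0$ with $\Prob{\varepsilon\|W_A\|_{C^\theta([0,T],V)}>R}\le C\exp(-c\varepsilon^{-2}R^2)$, in the same spirit as \autoref{lem:fernique} and as in \cite{AIP13,DPZa14}. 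Equivalently, one can get the $\theta$-Hölder bound with values in $H$, which is more elementary, and upgrade it to $V$ by interpolating with the $C([0,T],V_0)$ bound via $\|x\|_V\le\|x\|_H^{1-2\alpha/\delta}\|x\|_{V_0}^{2\alpha/\delta}$.

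Putting the pieces together, for $M<\infty$ I would set $K_M$ equal to the closure in $C([0,T],V)$ of $\{f:\ \|f\|_{C([0,T],V_0)}\le R,\ \|f\|_{C^\theta([0,T],V)}\le R\}$ for a radius $R=R(M)$ to be fixed. This set is equicontinuous in $V$ by the Hölder bound and pointwise relatively compact in $V$ by the $V_0$ bound, hence compact in $C([0,T],V)$; and a union bound combining \autoref{lem:fernique} with the Hölder--Fernique estimate yields $\Prob{\varepsilon W_A\notin K_M}\le 2C\exp(-c\varepsilon^{-2}R^2)$, so that $R^2\ge M/c$ gives $\limsup_{\varepsilon\to0}\varepsilon^2\log\Prob{\varepsilon W_A\notin K_M}\le -M$. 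This is exactly the exponential tightness of \autoref{def:exptight}, and \autoref{thm:ICP} finishes the proof.

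I expect the genuine difficulty to be the production of these compact exhausting sets, specifically obtaining equicontinuity in time with exponential tails: \autoref{lem:fernique} alone is insufficient because bounded subsets of $C([0,T],V_0)$ are \emph{not} relatively compact in $C([0,T],V)$ (they may fail to be equicontinuous), and one really has to exploit that $\alpha$ is strictly below $\delta/2$ in order to gain Hölder regularity of $W_A$ in $V$ --- at the endpoint $\alpha=\delta/2$ only continuity is available, which is precisely why $V_0$ cannot be put in place of $V$ in the statement. A minor technical nuisance is the non-separability of Hölder spaces when applying Fernique's theorem, circumvented by working in the little-Hölder space (or with a slightly smaller exponent), as in \cite{AIP13}.
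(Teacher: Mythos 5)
Your argument is correct and follows the same skeleton as the paper's proof: deduce the LDP on $C([0,T],V)$ from the one on $C([0,T],H)$ (\autoref{prop:LDPWAH}) via the Inverse Contraction Principle (\autoref{thm:ICP}), the only real task being exponential tightness of $(\mathcal{L}(\varepsilon W_A))_{\varepsilon>0}$ in $C([0,T],V)$. Where you differ is in how that tightness is produced. The paper settles it in one sentence, asserting that \autoref{lem:fernique} together with the compactness of $V_0\hookrightarrow V$ already yields exponential tightness; your objection to this shortcut is well taken, since a sup-norm ball of $C([0,T],V_0)$ is pointwise relatively compact in $V$ but need not be equicontinuous (consider $f_n(t)=\sin(nt)\,v$ with $\|v\|_{V_0}=1$), hence is not relatively compact in $C([0,T],V)$: the paper's one-liner implicitly uses exactly the time-regularity input you supply. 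Your remedy --- a Fernique-type tail bound for the $C^\theta([0,T],V)$-norm of $W_A$ (legitimate because $\alpha<\delta/2$ gives H\"older regularity of the stochastic convolution with values in $V$; the non-separability of H\"older spaces is handled by little-H\"older spaces or by interpolating the $H$-valued H\"older bound against the $C([0,T],V_0)$ bound), combined with Ascoli--Arzel\`a compact sets and a union bound with $R^2\geq M/c$ --- makes the tightness step airtight and matches \autoref{def:exptight}. For completeness, note an alternative repair that avoids H\"older estimates: the law of $W_A$ is a centered Gaussian measure on the Polish space $C([0,T],V)$ itself, so the generalized Schilder theorem (\cite[Theorem 12.7]{DPZa14}, as in the Schilder-type proposition of Section 3) gives an LDP with a good rate function directly on $C([0,T],V)$, and on a Polish space an LDP with a good rate function implies exponential tightness (see \cite{DeZe09}); one then concludes via \autoref{thm:ICP} exactly as you do. Your closing observation that the effective domain of $S$ lies in $C([0,T],V_0)$ is correct but not needed, since \autoref{thm:ICP} already returns the action functional $S\circ g=S$.
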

\begin{proof}
Since the embedding $V_0 \hookrightarrow V$ is compact, \autoref{lem:fernique} above implies exponential tightness with rate $\varepsilon^2$ (in the sense of \autoref{def:exptight}) of the family $(\mathcal{L}(\varepsilon W_A))_{\varepsilon>0}$ in the space $C([0,T],V)$. Therefore by \autoref{thm:ICP} the LDP for the family $(\varepsilon W_A)_{\varepsilon>0}$ in the space $C([0,T],H)$ transfers back to a LDP in the space $C([0,T],V)$ with same rate and action functional.
\end{proof}

\subsection{SDEs with Lipschitz nonlinearity}

In this subsection we derive Freidlin-Wentzell Theorem for (probabilistically strong) mild solutions of the stochastic differential equation 
\begin{equation} \label{eq:SDElip}
dX^{\varepsilon}_t = \left( AX^{\varepsilon}_t + B(X^{\varepsilon}_t) \right) dt + \varepsilon dW_t, \quad t \in [0,T],
\end{equation}
with initial condition $X^{\varepsilon}_0=x_0 \in V_0$ and bounded Lipschitz nonlinearity $B$. The condition $x_0 \in V_0$ will be essential in the proof of forthcoming \autoref{lem:exptight}, although it is not necessary for \autoref{prop:3.5}.
We recall that in \cite[Chapter 7]{DPZa14} the authors prove there exists a unique mild solution of (\ref{eq:SDElip}) taking values in the space $C([0,T],H)$. The results stated here are classical and rely on Contraction Principle (\autoref{thm:CP}).

\begin{prop} \label{prop:3.5}
Assume (H) and $B \in Lip_b(H,H)$. For $\varepsilon>0$, let $X^{\varepsilon}$ be the unique mild solution of (\ref{eq:SDElip}). Then a LDP on the space $C([0,T],H)$ holds for the family of laws $(\mathcal{L}(X^{\varepsilon}))_{\varepsilon>0}$, with rate $\varepsilon^2$ and action functional given by (\ref{eq:actionLipSDE}).
\end{prop}

\begin{proof}
By definition of mild solution we have
\begin{equation*}
X^{\varepsilon}_t = e^{tA}x_0 + \int_0^t e^{(t-s)A} B(X^{\varepsilon}_s)ds + \varepsilon W_A(t).
\end{equation*}
Defining $\Gamma_B: C([0,T],H)\to C([0,T],H)$ to be the map that to a continuous function $w$ associates the unique solution $z$ of the equation
\begin{equation*}
z_t =e^{tA}x_0  + \int_0^t e^{(t-s)A}B(z_s) ds + w_t;
\end{equation*}
by Lipschitzianity of $B$ the map $\Gamma_B$ is continuous and invertible, with inverse:
\begin{equation*}
\Gamma_B^{-1}(z)_t = z_t - e^{tA}x_0 - \int_0^t e^{(t-s)A}B(z_s) ds.
\end{equation*}
Since $ X^{\varepsilon}= \Gamma_B( \varepsilon W_A)$, by \autoref{thm:CP} a LDP for $(\mathcal{L}(X^{\varepsilon}))_{\varepsilon>0}$ holds with rate $\varepsilon^2$ and action functional  
\begin{equation} \label{eq:actionLipSDE} 
S_B(\varphi) = S(\Gamma_B^{-1}(\varphi))= S \left( \varphi - e^{\cdot A}x_0  - \int_0^\cdot e^{(\cdot-s)A}B(\varphi_s) ds \right),
\end{equation}
where $S$ is given by (\ref{eq:actionWA}).
\end{proof}

As already done for the stochastic convolution, now we promote the LDP on $C([0,T],H)$ to a stronger LDP on $C([0,T],V)$. We prove first exponential tightness of the family of laws $(\mathcal{L}(X^\varepsilon))_{\varepsilon>0}$, using the following lemma.

\begin{lem} \label{lem:boundconv}
For every $\beta>0$ there exists a constant $C=C_\beta>0$ such that for every $x \in H$ and $t>0$
\begin{equation*}
\|(-A)^\beta e^{tA} x\|_H \leq \frac{C}{t^\beta}\|x\|_H.
\end{equation*}
\end{lem}

\begin{proof}
For every $x \in H$ one has $(-tA)^\beta e^{-tA} x = \sum_n t^\beta \lambda_n^\beta e^{-t \lambda_n} \langle x,e_n\rangle e_n$, where $(e_n)_{n \in \N}$ is a basis of eigenvectors of $A$. Since the real function $f(r) \coloneqq r^{\beta} e^{-r}$ is bounded from above by a constant $C_\beta$ uniformly in $r \geq 0$, the thesis follows.
\end{proof}

\begin{lem} \label{lem:exptight}
Assume (H) and $B \in Lip_b(H,H)$. Then the family of laws $(\mathcal{L}(X^\varepsilon))_{\varepsilon>0}$ is exponentially tight in $C([0,T],V)$ with rate $\varepsilon^2$.
\end{lem}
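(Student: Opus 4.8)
The plan is to reduce the exponential tightness of $(\mathcal{L}(X^\varepsilon))_{\varepsilon>0}$ in $C([0,T],V)$ to the corresponding Fernique-type estimate for the stochastic convolution already established in \autoref{lem:fernique}, using the mild formulation together with the bound on $B$ and the smoothing estimate of \autoref{lem:boundconv}. Recall that $X^\varepsilon_t = e^{tA}x_0 + \int_0^t e^{(t-s)A}B(X^\varepsilon_s)\,ds + \varepsilon W_A(t)$, and we want to show that for every $M<\infty$ there is a compact $K_M \subseteq C([0,T],V)$ with $\limsup_{\varepsilon\to 0}\varepsilon^2 \log\PP_\varepsilon(K_M^c)\le -M$. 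The natural candidate for the compact sets is a ball in the stronger space $C([0,T],V_0)$ intersected with a set of equibounded/equicontinuous functions — more precisely, a set of the form $\{z : \|z\|_{C([0,T],V_0)}\le R\}$ will be relatively compact in $C([0,T],V)$ because the embedding $V_0\hookrightarrow V$ is compact (this gives compactness in $C([0,T],V)$ via Arzelà–Ascoli once one also has a modulus of continuity, which the mild representation in the $V_0$-norm provides automatically for the drift and initial-datum parts, and which one gets for $\varepsilon W_A$ from its a.s. continuity in $C([0,T],V_0)$).

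The key step is an a priori pathwise estimate. First I would bound $\|X^\varepsilon_t\|_H$: since $\|e^{tA}x_0\|_H\le \|x_0\|_H$, $\|e^{(t-s)A}B(X^\varepsilon_s)\|_H \le a + b\|X^\varepsilon_s\|_H$, we get by Gronwall that $\sup_{t\le T}\|X^\varepsilon_t\|_H \le C_1\big(\|x_0\|_H + 1 + \varepsilon\|W_A\|_{C([0,T],H)}\big)$ for a constant $C_1=C_1(a,b,T)$. Next, to control the $V_0 = D((-A)^{\delta/2})$ norm, apply $(-A)^{\delta/2}$ to the mild formula: the term $(-A)^{\delta/2}e^{tA}x_0$ is bounded by $\|x_0\|_{V_0}$ (finite by hypothesis on $x_0$); the convolution term is bounded using \autoref{lem:boundconv} with $\beta=\delta/2<1$, giving $\big\|(-A)^{\delta/2}\int_0^t e^{(t-s)A}B(X^\varepsilon_s)\,ds\big\|_H \le C_{\delta/2}\int_0^t (t-s)^{-\delta/2}(a+b\|X^\varepsilon_s\|_H)\,ds$, which is finite because $\delta/2<1$ makes the kernel integrable, and is controlled by the $H$-bound just obtained times a constant depending on $T$. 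Hence
\begin{equation*}
\|X^\varepsilon\|_{C([0,T],V_0)} \le C_2\Big(\|x_0\|_{V_0} + 1 + \varepsilon\|W_A\|_{C([0,T],V_0)}\Big)
\end{equation*}
for a constant $C_2$ depending only on $a,b,T,A$ (here I bound $\|W_A\|_{C([0,T],H)}\le \|W_A\|_{C([0,T],V_0)}$).

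With this deterministic estimate in hand, the tail bound is immediate: for $R$ large, $\{\|X^\varepsilon\|_{C([0,T],V_0)} > R\} \subseteq \{\varepsilon\|W_A\|_{C([0,T],V_0)} > R/C_2 - C_2(\|x_0\|_{V_0}+1)\}$, so by \autoref{lem:fernique}
\begin{equation*}
\PP_\varepsilon\big(\|X^\varepsilon\|_{C([0,T],V_0)} > R\big) \le C\exp\big(-c\,\varepsilon^{-2}(R/C_2 - C_2(\|x_0\|_{V_0}+1))^2\big),
\end{equation*}
and thus $\limsup_{\varepsilon\to0}\varepsilon^2\log\PP_\varepsilon(\|X^\varepsilon\|_{C([0,T],V_0)}>R) \le -c(R/C_2 - C_2(\|x_0\|_{V_0}+1))^2 \to -\infty$ as $R\to\infty$. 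Choosing $R=R_M$ large enough that this quantity is $\le -M$, and setting $K_M$ to be the closure in $C([0,T],V)$ of $\{z\in C([0,T],V_0): \|z\|_{C([0,T],V_0)}\le R_M\}$, which is compact by the compact embedding $V_0\hookrightarrow V$ and Arzelà–Ascoli, completes the proof.

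The main obstacle I anticipate is the compactness argument for $K_M$: a bounded subset of $C([0,T],V_0)$ is not automatically relatively compact in $C([0,T],V)$ — one needs equicontinuity in time with values in $V$. This follows from the mild representation (the initial-datum and drift parts are Hölder in time in the $V$-norm, using \autoref{lem:boundconv} and the $H$-bound, with a modulus independent of the path), but one must be slightly careful that the $\varepsilon W_A$ contribution does not spoil this; the clean fix is to instead take $K_M$ to be the image under the (continuous, by the Lipschitz argument in the preceding proposition) solution map $\Gamma_B$ of a compact set of the form $\{w\in C([0,T],V_0):\|w\|_{C([0,T],V_0)}\le R_M\}$ — but this requires knowing $\Gamma_B$ maps $C([0,T],V)$-compacts to $C([0,T],V)$-compacts, i.e. continuity of $\Gamma_B$ on $C([0,T],V)$, which is the natural place to invoke the smoothing lemma once more. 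Either route works; I would present the direct Arzelà–Ascoli argument, proving the uniform-in-$\varepsilon$ time-modulus estimate in the $V$-norm as an auxiliary step.
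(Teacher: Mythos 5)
Your core reduction is exactly the paper's: bound $\|X^\varepsilon_t\|_{V_0}$ pathwise by $\|x_0\|_{V_0}$ plus a deterministic constant plus $\|\varepsilon W_A(t)\|_{V_0}$, using \autoref{lem:boundconv} with $\beta=\delta/2$ for the drift convolution, and then invoke the Gaussian tail estimate of \autoref{lem:fernique}. The only inessential difference is that you run a Gronwall argument through the linear-growth bound, whereas here $B\in Lip_b(H,H)$ is bounded, so the paper simply estimates the convolution by $C\|B\|_{\infty,H}$; your Gronwall detour is harmless and is in fact what the paper does later for the general (merely linear-growth) case in \autoref{lem:exptight2}, where it uses a generalized Gronwall inequality for the singular kernel.

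Concerning the compactness issue you raise: you are right that a $C([0,T],V_0)$-ball is not relatively compact in $C([0,T],V)$ without a time-modulus, but note that the paper faces the identical point already in \autoref{rmk:ICP} (exponential tightness of $\varepsilon W_A$ in $C([0,T],V)$ is deduced there from \autoref{lem:fernique} and the compact embedding $V_0\hookrightarrow V$), so at the paper's level of detail your argument is on the same footing. However, your ``clean fix'' is flawed as stated: taking $K_M=\Gamma_B\bigl(\{w:\|w\|_{C([0,T],V_0)}\le R_M\}\bigr)$ does not help, because that set of noise paths suffers from the very non-compactness you identified, so its continuous image need not be compact either. The correct version of that idea is to use the compact sets $K_M'\subseteq C([0,T],V)$ furnished by the exponential tightness of $(\varepsilon W_A)$ in $C([0,T],V)$ (\autoref{rmk:ICP}) and set $K_M=\Gamma_B(K_M')$: since $B$ is Lipschitz and bounded, $\Gamma_B$ maps $C([0,T],V)$ continuously into itself (the drift convolution even lands in $C([0,T],V_0)$ by \autoref{lem:boundconv}, and the fixed-point estimate in $C([0,T],H)$ controls the $V$-distance), and $X^\varepsilon=\Gamma_B(\varepsilon W_A)$ gives $\{X^\varepsilon\notin K_M\}\subseteq\{\varepsilon W_A\notin K_M'\}$, which yields the exponential tightness without any separate equicontinuity estimate for $X^\varepsilon$.
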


\begin{proof}
The thesis follows from the condition $x_0 \in V_0$, \autoref{lem:fernique} and the following inequality, holding for every $t \in [0,T]$
\begin{equation*}
\|X^\varepsilon_t\|_{V_0} \leq \|x_0\|_{V_0}  + C\|B\|_{\infty,H} + \|\varepsilon W_A(t)\|_{V_0},
\end{equation*} 
where $\|B\|_{\infty,H} = \sup_{x \in H} \|B(x)\|_H$ and $C=C_{A,T}$ is a constant independent of $\varepsilon$. Indeed we have
\begin{align*}
\|X^\varepsilon_t\|_{V_0} &\leq \|x_0\|_{V_0} + \int_0^t \left\| e^{(t-s)A}B(X^\varepsilon_s) \right\|_{V_0} ds +  \|\varepsilon W_A(t)\|_{V_0} \\
&= \|x_0\|_{V_0} + \int_0^t \left\| (-A)^{\delta/2}e^{(t-s)A}B(X^\varepsilon_s) \right\|_{H} ds +\|\varepsilon W_A(t)\|_{V_0} ,
\end{align*}
and by \autoref{lem:boundconv} the integral can be estimated with
\begin{gather*}
\int_0^t \left\| (-A)^{\delta/2}e^{(t-s)A}B(X^\varepsilon_s) \right\|_{H} ds \leq \int_0^t \frac{C_\delta}{(t-s)^{\delta/2}} \left\| B(X^\varepsilon_s) \right\|_{H} ds
\leq  C\|B\|_{\infty,H}.
\end{gather*}
\end{proof}

\autoref{lem:exptight} above allows to tranfer the LDP for the family $(\mathcal{L}(X^\varepsilon))_{\varepsilon>0}$ on the space $C([0,T],H)$ to a LDP on the space $C([0,T],V)$ with same rate and action functional, as stated in the following:

\begin{prop} \label{prop:LDPlip}
Assume (H) and $B \in Lip_b(H,H)$. For $\varepsilon>0$, let $X^{\varepsilon}$ be the unique mild solution of (\ref{eq:SDElip}). Then a LDP on the space $C([0,T],V)$ holds for the family  $(\mathcal{L}(X^{\varepsilon}))_{\varepsilon>0}$, with rate $\varepsilon^2$ and action functional given by (\ref{eq:actionLipSDE}).
\end{prop}

\section{Approximation of $B$} \label{sec:approx}

In this section we want to construct a sequence of functions $(B_R)_{R \in \mathbb{N}} \subseteq Lip_b(H,H)$ that approximates $B$ in a suitable sense, see \autoref{thm:approx} below.

Recall that the embedding $V \hookrightarrow H$ is compact, therefore $F_R \coloneqq \{\|x\|_{V} \leq R\}$ is compact in $H$ for every $R>0$.
Moreover, for every $x \in V$ it holds $\|x\|_H \leq \lambda_0^{-\alpha} \|x\|_V$. 
The main result of this section is the following:
\begin{thm} \label{thm:approx}
Assume {(H)} and $B\in C(H,H)$ with at most linear growth, i.e. there exist finite constants $a,b$ such that $\|B(x)\|_H \leq a + b\|x\|_H$ for every $x \in H$. Then there exist $C>0$ and a sequence $(B_R)_{R \in \mathbb{N}} \subseteq Lip_b(H,H)$ such that for every $R \geq 1$ one has $\|B_R\|_{\infty} \leq a+\lambda_0^{-\alpha}bR+1$ and
\begin{equation*}
\sup_{x \in F_R} \left\| {B}_R(x) - B(x) \right\|_H \leq \frac{C}{R^2} .
\end{equation*}
\end{thm}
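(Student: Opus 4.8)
The strategy is to first mollify $B$ by averaging against a smooth kernel, then truncate the resulting smooth function so that it becomes bounded, and finally apply a Lipschitz extension (Kirszbraun) to a suitable compact set to control the Lipschitz constant and boundedness globally while keeping the approximation error only on $F_R$. The key quantitative input we have is that $B$ is continuous with at most linear growth, so on the compact set $F_{R+1}$ it is uniformly continuous; we need to convert the modulus of continuity into a rate $C/R^2$, which forces us to mollify at a scale that depends on $R$ and, critically, to use the compactness of $F_R$ in $H$ in a uniform way.

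First I would fix a standard mollifier on $H$: since $H$ is infinite-dimensional one cannot mollify with Lebesgue measure, so instead I would mollify along the scale of a Gaussian measure, or — more in the spirit of the paper — use a finite-dimensional projection $\pi_N$ and mollify only the finitely many coordinates, exploiting that $F_R$ is compact and hence nearly finite-dimensional (for any $\eta>0$ there is $N$ with $\sup_{x\in F_R}\|x-\pi_N x\|_H<\eta$, with $N$ controlled by the eigenvalue decay encoded in \textbf{(H)}). On $\pi_N H$ I would convolve $B\circ\pi_N$ with a mollifier of width $\rho$ to obtain a smooth (hence locally Lipschitz) approximation $\tilde B$ whose Lipschitz constant on bounded sets is finite, and such that $\sup_{x\in F_R}\|\tilde B(x)-B(x)\|_H$ is bounded by the modulus of continuity of $B$ on a slightly enlarged compact set, evaluated at $\rho+\eta$. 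Then I would truncate: replace $\tilde B(x)$ by $\tilde B(x)\cdot\chi(\|x\|_H)$ for a smooth cutoff $\chi$ equal to $1$ on $[0,R]$ and $0$ outside $[0,R+1]$; by the linear growth bound this gives $\|\tilde B\,\chi\|_\infty\le a+b(R+1)$, and one extra unit of slack absorbs the error from the mollification, yielding $\le a+bR+1$ after reorganizing (shrinking the cutoff radius). This truncated function is globally bounded and globally Lipschitz (its Lipschitz constant may depend on $R$ and $N$, but that is allowed since we only need $B_R\in Lip_b(H,H)$), so one may simply take $B_R$ to be this function, or, if one prefers, apply Kirszbraun to extend its restriction to $F_{R+1}$ with controlled Lipschitz constant.

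The main obstacle is the quantitative rate $C/R^2$: a priori the modulus of continuity of $B$ is arbitrary, so there is no way to get a polynomial rate in $R$ for a fixed mollification scale. The resolution must be that the mollification scale $\rho=\rho_R$ is chosen as a function of $R$ — one picks $\rho_R$ small enough (depending on the modulus of continuity of $B$ on $F_{R+1}$, which is a fixed function of $R$ since $B$ and the embedding are fixed) that the error is below $C/R^2$. In other words, the constant $C$ and the sequence $B_R$ are constructed together: given $R$, first compute how small $\rho_R$ and $\eta_R$ must be to force the modulus of continuity term below $R^{-2}$, then mollify at that scale; the Lipschitz constant of $B_R$ blows up as $R\to\infty$ (as $\rho_R^{-1}$ times a projection-dependent factor), but this is harmless. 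I would write the argument so that this dependence is transparent: the content of the theorem is not that the approximation is uniform, but that for each fixed $R$ one can be $R^{-2}$-close on $F_R$ with a function that is bounded by roughly the linear-growth envelope; the proof is essentially a bookkeeping of mollification plus cutoff plus (optionally) Kirszbraun, with compactness of $F_R$ in $H$ the one genuinely infinite-dimensional ingredient.
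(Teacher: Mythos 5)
Your plan is correct, but it follows a genuinely different route from the paper. The paper first cuts off the \emph{values} of the drift, setting $B'_R(x)=\rho(\|B(x)\|_H)B(x)$, then smooths in a coordinate-free, infinite-dimensional way by averaging against the Gaussian law $\nu_R=\mathcal{L}(W_A(\tau_R))$ composed with $e^{A\tau_R}$ (so that $\overline{B}_R\in C^\infty_b(H,H)$ by the Da Prato--Zabczyk regularization theorem), estimates the error on $F_R$ by splitting the $\nu_R$-integral into a core where $\|y\|_H<\delta_R$, $\|y\|_V\le 1$ plus Gaussian tails, and finally invokes the Kirszbraun extension theorem together with a second cut-off to land in $Lip_b(H,H)$. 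You instead exploit the compactness of $F_R$ directly: project onto finitely many eigenmodes, mollify in the finite-dimensional variable at a scale $r_R$ dictated by the modulus of continuity of $B$ on $F_{R+1}$, and truncate in $\|x\|_H$; this makes both the Kirszbraun step and the Gaussian tail estimates unnecessary and is arguably more elementary, at the price of the (routine) bookkeeping you yourself indicate: the cut-off must equal $1$ on the $H$-ball containing $F_R$ while its slack $s$ plus the mollification radius must satisfy $b(s+r_R)\le 1$ so that $\|B_R\|_\infty\le a+bR+1$, and one should check that the mollification shifts $z\in\pi_N H$, $\|z\|_H\le r_R$, satisfy $\|z\|_V\le\lambda_N^{\alpha} r_R$, so that for $r_R$ small the points $\pi_N x-z$, $x\in F_R$, stay inside the compact set $F_{R+1}$ on which uniform continuity is invoked (this is the analogue of the paper's requirement $x,\,x+y,\,e^{A\tau_R}x+y\in F_{R+1}$). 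Both arguments share the essential mechanism: uniform continuity of $B$ on the compact set $F_{R+1}$, a smoothing scale chosen as a function of $R$ (your $r_R$, the paper's $\delta_R$ and $\tau_R=\delta_R^3/2\lambda_0$) to force the error below $R^{-2}$, and the freedom to let the Lipschitz constant of $B_R$ blow up with $R$, which is exactly what the statement tolerates.
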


\begin{proof}
Fix an integer $R \geq 1$. Without loss of generality, assume $a,b>0$. We artificially bound $B$ in the following way: take a smooth cut-off function $\rho = \rho_R: [0,\infty) \to [0,1]$ such that $\rho(r)= 1$ if $r \leq a+\lambda_0^{-\alpha}bR$ and  $\rho(r)= 0$ if $r \geq a+\lambda_0^{-\alpha}bR+1$, and consider from now on $B'_R(x)=\rho(\|B(x)\|_H) B(x)$. By the assumptions on the growth of $B$, $B'_R$ coincides with $B$ in $F_R$: indeed, if $\|x\|_V \leq R$ then also $\|x\|_H \leq \lambda_0^{-\alpha}R$, and by the linear growth assumption on $B$ we have $\|B(x)\|_H \leq a+\lambda_0^{-\alpha}bR$ and therefore $\rho(\|B(x)\|_H)=1$.

Since $B'_R \in C(H,H)$, the restriction of $B'_R$ to $F_{R+1}$ is uniformly continuous, and let $\delta_R$ be a positive number such that $\|B'_R(x)-B'_R(y)\|_H < 1/R^2$ for every $x,y \in F_{R+1}$, $\|x-y\|_H < \delta_R$. Without any loss of generality, we can also suppose $\delta_R < 1/{R}$. Denote also $\tau_R = \delta_R^p$, for some sufficiently large parameter $p$ to be determined later. 
Let $\nu_R \coloneqq \mathcal{L}(W_A(\tau_R))$ be the law on $H$ of the random variable $W_A(\tau_R)$, and define
\begin{equation*}
\overline{B}_R(x) \coloneqq \int_H B'_R(e^{A \tau_R}x+y) \nu_R(dy).
\end{equation*}

It is well known (see \cite[Theorem 6.2.2]{DPZa02}) that with this construction $\overline{B}_R \in C^\infty_b(H,H)$ for every integer $R$.
Moreover, $\overline{B}_R$ approximates $B'_R$ (and hence also $B$) uniformly on $F_R$: indeed, let $x \in F_R$ and consider
\begin{align*}
\overline{B}_R(x)-B'_R(x) &=  \int_{\substack{\|y\|_H < \delta_R \\ \|y\|_{V} \leq 1}} \left( B'_R(e^{A \tau_R}x+y)-B'_R(x)\right) \nu_R(dy)  \\
&+ \int_{\substack{\|y\|_H < \delta_R \\ \|y\|_{V} > 1}}  \left( B'_R(e^{A \tau_R}x+y)-B'_R(x)\right) \nu_R(dy)  \\ 
&+  \int_{\|y\|_H \geq \delta_R } \left( B'_R(e^{A \tau_R}x+y)-B'_R(x)\right) \nu_R(dy). 
\end{align*}

For any $R \geq 1$, the last two summands in the expression above are easily bounded with the following quantities
\begin{gather*}
\left\| \int_{\substack{\|y\|_H < \delta_R \\ \|y\|_{V} > 1}} \left( B'_R(e^{A \tau_R}x+y)-B'_R(x)\right) \nu_R(dy) \right\|_H \leq  2(a+\lambda_0^{-\alpha}bR+1) \nu_R \left( \|y\|_{V} > 1 \right), \\
\left\| \int_{\|y\|_H \geq \delta_R } \left( B'_R(e^{A \tau_R}x+y)-B'_R(x)\right) \nu_R(dy) \right\|_H 
\leq 2(a+\lambda_0^{-\alpha}bR+1) \nu_R\left( \|y\|_H \geq \delta_R \right),
\end{gather*}
which are smaller than $C/R^2$ for some constant $C>0$ independent of $R$. To prove this claim, it is sufficient to show $\nu_R \left( \|y\|_{V} > 1 \right)=\mathbb{P} \left( \|W_A(\tau_R)\|_V>1\right)\leq C/R^3$ and $\nu_R\left( \|y\|_H \geq \delta_R \right)=\mathbb{P} \left( \|W_A(\tau_R)\|_H \geq \delta_R\right) \leq C/R^3$.
For the first bound, by Markov inequality and we have
\begin{align*}
\mathbb{P} \left( \|W_A(\tau_R)\|_V>1\right) 
&\leq
\mathbb{E} \left[ \|(-A)^{\alpha}W_A(\tau_R)\|_H^2 \right]
=
\mathbb{E} \left[ \sum_{k \in \mathbb{N}} \left|\int_0^{\tau_R}\lambda_k^{\alpha} e^{-\lambda_k(\tau_R -s)} dW^k_s \right|^2 \right]
\\
&=
\frac12 \sum_{k \in \mathbb{N}} \lambda_k^{2\alpha-1} (1-e^{-2\lambda_k \tau_R})
\\
&=
\frac12 \sum_{k \in \mathbb{N}} \lambda_k^{\delta-1} \frac{1-e^{-2\lambda_k \tau_R}}{\lambda_k^{\delta-2\alpha}}
\leq
C\, \tau_R^{\delta-2\alpha},
\end{align*}
where in the last line we have used $\alpha<\delta/2$, $Tr((-A)^{-1+\delta}) < \infty$ and the inequality $1-e^{-r} \leq r$ for every $r \geq 0$. Since $\tau_R^{\delta-2\alpha} = \delta_R^{p(\delta-2\alpha)} \leq R^{-p(\delta-2\alpha)}$, the bound $\nu_R \left( \|y\|_{V} > 1 \right)\leq C/R^3$ holds taking $p$ sufficiently large.

As for the second bound, we argue in a similar fashion to obtain
\begin{align*}
\mathbb{P} \left( \|W_A(\tau_R)\|_H>\delta_R\right) 
&\leq
\frac{1}{\delta_R^2}
\mathbb{E} \left[ \|W_A(\tau_R)\|_H^2 \right]
=
\frac{1}{\delta_R^2}
\mathbb{E} \left[ \sum_{k \in \mathbb{N}} \left|\int_0^{\tau_R}e^{-\lambda_k(\tau_R -s)} dW^k_s \right|^2 \right]
\\
&=
\frac{1}{2\delta_R^2}
\sum_{k \in \mathbb{N}} \lambda_k^{-1} (1-e^{-2\lambda_k \tau_R})
\\
&=
\frac{1}{2\delta_R^2} 
\sum_{k \in \mathbb{N}} \lambda_k^{\delta-1} \frac{1-e^{-2\lambda_k \tau_R}}{\lambda_k^\delta}
\leq
C\, \frac{\tau_R^\delta}{\delta_R^2}.
\end{align*}
Again, $\tau_R^\delta / \delta_R^2 = \delta_R^{p\delta-2} \leq R^{2-p\delta}$, and taking $p$ sufficiently large we get the desired estimate $\nu_R\left( \|y\|_H \geq \delta_R \right)\leq C/R^3$.

Finally, concerning the first term we have
\begin{gather*}
\left\|\int_{\substack{\|y\|_H < \delta_R \\ \|y\|_{V} \leq 1}} \left( B'_R(e^{A \tau_R}x+y)-B'_R(x)\right) \nu_R(dy) \right\|_H \\
\leq
\sup_{\substack{\|y\|_H < \delta_R \\ \|y\|_{V} \leq 1}} \left\| B'_R(e^{A \tau_R}x+y)-B'_R(x) \right\|_H.
\end{gather*}
Notice that $x,x+y,e^{A \tau_R}x+y \in F_{R+1}$ for every $x \in F_R$ and $\|y\|_V \leq 1$, and $\|e^{A \tau_R}x-x \|_H \leq \tau_R^{\alpha}\|x\|_V \leq \delta_R^{\alpha p}R \leq \delta_R$ for any $p$ sufficiently large;
therefore, by triangular inequality and the very definition of $\delta_R$ one has 
\begin{align*}
\sup_{\substack{\|y\|_H < \delta_R \\ \|y\|_{V} \leq 1}} \left\| B'_R(e^{A \tau_R}x+y)-B'_R(x) \right\|_H \leq C/R^2.
\end{align*}
Putting all together, one finally obtains
\begin{equation} \label{eq:unif}
\sup_{x \in F_R} \left\| \overline{B}_R(x) - B'_R(x) \right\|_H \leq \frac{C}{R^2} .
\end{equation}

Notice that since $F_R$ is compact in $H$ for every integer $R \geq 1$, the restriction of $\overline{B}_R$ to $F_R$ is Lipschitz for every $R$. Now we state a fundamental result which allows us to extend the restriction of $\overline{B}_R$ to $F_R$ to a Lipschitz function defined on the whole of $H$.

\begin{lem}[Kirszbraun extension Theorem] \label{lem:Kirszbraun}
\cite[Theorem 1.31]{Sc69}
Let $H$ be a Hilbert space, $E$ any subset of $H$, and $f:E\to H$ a Lipschitz map. Then $f$ can be extended to a Lipschitz map defined on all of $H$ with the same Lipschitz constant of $f$. 
\end{lem}

By \autoref{lem:Kirszbraun} the restriction of $\overline{B}_R$ to $F_R$ can be extended to a globally Lipschitz function $\tilde{B}_R$. Clearly also $\tilde{B}_R$ satisfies (\ref{eq:unif}). Now we artificially bound $\tilde{B}_R$ again, defining 
\begin{equation} \label{eq:BR}
B_R(x) \coloneqq \rho\left(\|\tilde{B}_R(x)\|_H\right) \tilde{B}_R(x).
\end{equation}

Clearly $B_R$ so defined is bounded by $a+\lambda_0^{-\alpha}bR+1$. To  check that $B_R$ is still a Lipschitz function, take $x,y \in H$ and consider $\| {B}_R(x) - {B}_R(y)\|_H$. If $x,y$ are such that both $\| \tilde{B}_R(x)\|_H$ and $\|\tilde{B}_R(y)\|_H$ are greater than $a+\lambda_0^{-\alpha}bR+1$ there is nothing to prove, and if both $\| \tilde{B}_R(x)\|_H$ and $\|\tilde{B}_R(y)\|_H$ are smaller than $a+\lambda_0^{-\alpha}bR+2$ then one has the following bound
\begin{align*}
\| {B}_R(x) - {B}_R(y)\|_H &\leq \| \tilde{B}_R(x) - \tilde{B}_R(y)\|_H \\
&+ \| \tilde{B}_R(x)\|_H \left( \rho\left(\|\tilde{B}_R(x)\|_H\right) - \rho\left(\|\tilde{B}_R(y)\|_H\right) \right) \\
&\leq \left( \| \tilde{B}_R\|_{Lip} +(a+\lambda_0^{-\alpha}bR+2)\| \rho\|_{Lip}\| \tilde{B}_R\|_{Lip} \right) \|x-y\|_H.
\end{align*}

We are left with the case where, let say, $x,y$ are such that $\| \tilde{B}_R(x)\|_H \leq a+\lambda_0^{-\alpha}bR+1$ and $\| \tilde{B}_R(y)\|_H \geq a+\lambda_0^{-\alpha}bR+2$. Then the Lipschitz property of $\tilde{B}_R$ implies the bound $ \|x-y\|_H \geq \| \tilde{B}_R\|_{Lip}^{-1}$ (of course unless $\| \tilde{B}_R\|_{Lip}=0$, occurrence which trivializes this construction). So in this last case one has
\begin{align*}
\| {B}_R(x) - {B}_R(y)\|_H &= \| {B}_R(x) - {B}_R(y)\|_H \frac{\|x-y\|_H}{\|x-y\|_H} \\
&\leq 2(a+\lambda_0^{-\alpha}bR+1) \| \tilde{B}_R\|_{Lip}\|x-y\|_H.
\end{align*}

Recalling (\ref{eq:unif}) and the fact that $B'_R$ coincides with $B$ on $F_R$, the proof is complete.
\end{proof}

\section{Main result} \label{sec:main}
In this section we prove \autoref{thm:LDP} for solutions of (\ref{eq:SDE})
\begin{equation*} 
dX^{\varepsilon}_t = \left( AX^{\varepsilon}_t + B(X^{\varepsilon}_t) \right) dt + \varepsilon dW_t, \quad t \in [0,T],
\end{equation*}
with initial condition $X^{\varepsilon}_0=x_0 \in V_0$, under the assumptions {(H)} and $B\in C(H,H)$ with at most linear growth. Although here our assumptions on the nonlinear drift $B$ are weaker, we recover the same expression for the action functional valid for the case $B \in Lip_b(H,H)$:
\begin{equation} \label{eq:action}
S_B(\varphi) = S \left( \varphi - e^{\cdot A}x_0 -\int_0^\cdot e^{(\cdot-s)A}B(\varphi_s) ds \right),
\end{equation}
where $S$ is given by (\ref{eq:actionWA}). The strategy of the proof is the following, and it is adapted from \cite{He01}. To prove the lower bound (\ref{eq:LowerBound}), we approximate the nonlinearity $B$ with a sequence $(B_R)_{R \in \mathbb{N}}$ given by \autoref{thm:approx} and use the fact that a LDP does hold for the family $(\mathcal{L}(X^{\varepsilon,R}))_{\varepsilon>0}$, where $X^{\varepsilon,R}$ is the (probabilistically strong) solutions of (\ref{eq:SDE}) with $B$ replaced by $B_R$, defined on the same stochastic basis $(\Omega_\varepsilon, (\mathcal{F}^\varepsilon_t), \PP_\varepsilon)$ which supports $X^\varepsilon$ and $W$. Then, by exponential tightness of the family $(\mathcal{L}(X^{\varepsilon}))_{\varepsilon>0}$ (\autoref{lem:exptight2}) we deduce compactness of the sublevels of $S_B$ by the validity of the lower bound. Finally, upper bound (\ref{eq:UpperBound}) is proved using the same calculations used for the lower bound and compactness of the sublevels of $S_B$.

\subsection{A remark on the expression of $S_B$}
In this subsection we prove an equivalent expression for the action functional $S_B$ given by (\ref{eq:action}), more similar in the spirit to the action functional given by the Freidlin-Wentzell Theorem for finite-dimensional diffusions (see \cite{FrW12}). 

\begin{prop} \label{prop:formulation_SB}
Let $\varphi \in C([0,T],H)$. Then the following formula holds:
\begin{equation} \label{eq:action2}
S_B(\varphi) = \frac{1}{2} \int_0^T \|\dot{\varphi}_t - A \varphi_t - B(\varphi_t) \|_H^2 dt, 
\end{equation}
if $\varphi \in W^{1,2}([\tau,T],H) \cap L^2([\tau,T],D(A))$ for every $\tau>0$, $\varphi(0)=x_0$ and $S_B(\varphi) = +\infty$ otherwise.
\end{prop}
 
The main issue in proving the formula above consists in the identification of the domain of finiteness of $S_B$, that is the content of the forthcoming:

\begin{lem} \label{lem:domain}
If $S_B(\varphi)<+\infty$ then $\varphi \in W^{1,2}([\tau,T],H) \cap L^2([\tau,T],D(A))$ for every $\tau>0$, $\dot{\varphi} - A\varphi - B(\varphi) \in L^2([0,T],H)$ and $\varphi(0)=x_0$.
\end{lem}

\begin{proof}
To ease notations define $\psi_t \coloneqq \varphi_t - e^{tA}x_0 - \int_0^t e^{(t-s)A} B(\varphi_s)ds$. One has
\begin{equation*}
S_B(\varphi) = S(\psi) = \sup_{N \in \mathbb{N}} S_N(\pi_N \psi) < \infty,
\end{equation*}
and $\pi_N\psi \in W^{1,2}_0([0,T],\pi_N H)$ for every $N$ follows, from which one can deduce also $\pi_N \varphi \in W^{1,2}([0,T],\pi_N H)$ and $\varphi(0)=x_0$. Moreover, it is also easy to check that 
\begin{align*}
\frac{d}{dt} (\pi_N \psi_t) - A \pi_N \psi_t &= \frac{d}{dt} (\pi_N \varphi_t) - Ae^{tA}\pi_N x_0 - \pi_N B(\varphi_t) - A \int_0^t e^{(t-s)A}\pi_N B(\varphi_s) ds \\
&- A\pi_N \varphi_t + Ae^{tA}\pi_N x_0 +  A \int_0^t e^{(t-s)A}\pi_N B(\varphi_s) ds \\
&=\frac{d}{dt} (\pi_N \varphi_t) - A\pi_N \varphi_t- \pi_N B(\varphi_t) ,
\end{align*}
and therefore $\sup_{N \in \mathbb{N}} S_N(\pi_N \psi) < \infty$ implies
\begin{gather*}
\sup_{N \in \mathbb{N}} \int_0^T \left\|\frac{d}{dt} (\pi_N \varphi_t) - A\pi_N \varphi_t  - \pi_N B(\varphi_t) \right\|_H^2 dt < \infty.
\end{gather*}

By our assumption on the growth of $B$ it holds $B(\varphi) \in L^2([0,T],H)$, and from the line above one can deduce
\begin{gather} \label{eq:supN}
\sup_{N \in \mathbb{N}} \int_0^T \left\|\frac{d}{dt} (\pi_N \varphi_t) - A\pi_N \varphi_t\right\|_H^2 dt < \infty. 
\end{gather}

Call $g_{N}:\left[0,T\right]  \rightarrow \pi_N H$ the function 
\begin{equation*}
g_{N}\left(  t\right)  :=\frac{d}{dt}\left(  \pi_{N}\varphi\right)  \left(
t\right)  -A   \pi_{N}\varphi_t .
\end{equation*}
Since $\sup_{N \in \mathbb{N}} \left\| g_{N}\right\| _{L^{2}\left( [0,T],H\right)  }<\infty$ there exists a subsequence (that we still denote $(g_N)_{N \in \mathbb{N}}$) which converges weakly to some
$g$ in $L^{2}([0,T],H)$. 
From the identity
\begin{equation*} 
\frac{d}{dt}\left(  \pi_{N}\varphi\right)(t) = A 
\pi_{N}\varphi_t    + g_{N}\left(  t\right)
\end{equation*}
and thus 
\begin{equation*}
\langle \varphi_t , \pi_N h \rangle_H = \langle x_0 , \pi_N h \rangle_H + \int_0^t \langle \varphi_s , \pi_N Ah \rangle_H ds + \int_0^t \langle g_N(s) , h \rangle_H ds 
\end{equation*}
for every $h\in D\left(  A\right)  $, we deduce by Lebesgue Theorem
\begin{equation*}
\langle \varphi_t , h \rangle_H = \langle x_0 , h \rangle_H +\int_0^t \langle \varphi_s ,  Ah \rangle_H ds + \int_0^t \langle g(s) , h \rangle_H ds. 
\end{equation*}

Since weak solutions are mild solutions, we deduce the expression of $\varphi$:
\[
\varphi_t  = e^{tA} x_0 + \int_{0}
^{t}e^{\left(  t-s\right)  A}g\left(  s\right)  ds
\qquad
\mbox{for a.e. } t \in [0,T],
\]
which implies $\varphi \in W^{1,2}([\tau,T],H) \cap L^2([\tau,T],D(A))$ for every $\tau>0$ and 
\[
\dot{\varphi}_t  = A \varphi_t +  g(t)
\qquad
\mbox{for a.e. } t \in [0,T],
\]
that means exactly $\dot{\varphi}  - A \varphi = g \in L^2([0,T],H)$. Since also $B(\varphi) \in L^2([0,T],H)$, the proof is complete.
\end{proof}

\begin{rmk} \label{rmk:domain}
If $x_0 \in D((-A)^{1/2})$ we are actually able to prove $\varphi \in W^{1,2}([0,T],H) \cap L^2([0,T],D(A))$ whenever $S_B(\varphi)<+\infty$: indeed, for every $N \in \mathbb{N}$ it holds
\begin{align*}
\int_0^T \left\|\frac{d}{dt} (\pi_N \varphi_t) - A\pi_N \varphi_t\right\|_H^2 dt
&=
\int_0^T \left\|\frac{d}{dt} (\pi_N \varphi_t)\right\|_H^2 dt
+
\int_0^T \left\|A\pi_N \varphi_t\right\|_H^2 dt
\\
&\quad+
2 \left\|(-A)^{1/2} \pi_N \varphi_T\right\|_H^2 
-
2 \left\|(-A)^{1/2} \pi_N \varphi_0\right\|_H^2
\\
&\geq
\int_0^T \left\|\frac{d}{dt} (\pi_N \varphi_t)\right\|_H^2 dt
+
\int_0^T \left\|A\pi_N \varphi_t\right\|_H^2 dt
\\
&\quad
-
2 \left\|x_0\right\|_{D((-A)^{1/2})}^2,
\end{align*}
and taking the supremum over $N$ we deduce by (\ref{eq:supN})
\begin{align} \label{eq:inequality_rmk}
\int_0^T \left\|\dot{\varphi}_t\right\|_H^2 dt
&+
\int_0^T \left\|A \varphi_t\right\|_H^2 dt
\\
&=
\sup_{N \in \mathbb{N}} \nonumber
\int_0^T \left\|\frac{d}{dt} (\pi_N \varphi_t)\right\|_H^2 dt
+
\int_0^T \left\|A\pi_N \varphi_t\right\|_H^2 dt
\\
&\leq \nonumber
\sup_{N \in \mathbb{N}}
\int_0^T \left\|\frac{d}{dt} (\pi_N \varphi_t) - A\pi_N \varphi_t\right\|_H^2 dt
+
2 \left\|x_0\right\|_{D((-A)^{1/2})}^2 < \infty.
\end{align} 
\end{rmk}

We are now able to prove \autoref{prop:formulation_SB}.
\begin{proof}[Proof of \autoref{prop:formulation_SB}]
By previous \autoref{lem:domain}, if $S_B(\varphi)<+\infty$ then $\varphi \in W^{1,2}([\tau,T],H) \cap L^2([\tau,T],D(A))$ for every $\tau>0$ and $\varphi(0)=x_0$.
To prove the validity of (\ref{eq:action2}), let $g=\dot{\varphi} - A \varphi \in L^2([0,T],H)$ be as in the previous lemma. 
We have the strong convergences in $L^2([0,T],H)$:
\begin{gather*}
\pi_N g \to g, \quad 
\pi_N B(\varphi) \to B(\varphi), 
\end{gather*}
and therefore
\begin{align*}
S_B(\varphi)
&=
\sup_{N \in \mathbb{N}} \int_0^T \left\|\frac{d}{dt} (\pi_N \varphi_t)- A\pi_N \varphi_t  - \pi_N B(\varphi_t) \right\|_H^2 dt 
\\
&=
\sup_{N \in \mathbb{N}} \int_0^T \left\|\pi_N g(t)- \pi_N B(\varphi_t) \right\|_H^2 dt
\\
&=
\int_0^T \left\|g(t)- B(\varphi_t) \right\|_H^2 dt
\\
&=
\int_0^T \left\|\dot{\varphi}_t - A \varphi_t - B(\varphi_t) \right\|_H^2 dt.
\end{align*}
\end{proof}

\subsection{Preliminary lemmas}

\begin{lem} \label{lem:exptight2}
Assume (H) and $B \in C(H,H)$ with at most linear growth: $\|B(x)\|_H \leq a+b\|x\|_H$ for every $x \in H$. Then the family of laws $(\mathcal{L}(X^\varepsilon))_{\varepsilon>0}$ is exponentially tight in $C([0,T],V)$ with rate $\varepsilon^2$.
\end{lem}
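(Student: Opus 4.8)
The plan is to adapt the proof of \autoref{lem:exptight}: the pointwise bound on $\|B\|_\infty$ used there is no longer available and must be replaced by an a priori estimate obtained from a Gronwall inequality with a weakly singular kernel; moreover, since a bound in $C([0,T],V_0)$ controls only the pointwise values of a trajectory (through the compact embedding $V_0\hookrightarrow V$), one must in addition check equicontinuity in $V$ directly. All estimates below are pathwise, using only that $X^\varepsilon$ solves the mild equation.

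Fix $M<\infty$. By \autoref{lem:fernique} there is $\rho=\rho_M>0$ with $\limsup_{\varepsilon\to0}\varepsilon^2\log\Probe{\|\varepsilon W_A\|_{C([0,T],V_0)}>\rho}\le-M$, and by \autoref{rmk:ICP} the family $(\mathcal{L}(\varepsilon W_A))_{\varepsilon>0}$ is exponentially tight in $C([0,T],V)$, so there is a compact $\widetilde K=\widetilde K_M\subseteq C([0,T],V)$ with $\limsup_{\varepsilon\to0}\varepsilon^2\log\Probe{\varepsilon W_A\notin\widetilde K}\le-M$; being compact, $\widetilde K$ is uniformly equicontinuous, so $\omega(\eta)\coloneqq\sup_{w\in\widetilde K}\sup_{|t-s|\le\eta}\|w_t-w_s\|_V\to0$ as $\eta\to0$. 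Put $E_\varepsilon\coloneqq\{\|\varepsilon W_A\|_{C([0,T],V_0)}\le\rho\}\cap\{\varepsilon W_A\in\widetilde K\}$; then $\limsup_{\varepsilon\to0}\varepsilon^2\log\Probe{E_\varepsilon^c}\le-M$, and it suffices to exhibit a compact $K_M\subseteq C([0,T],V)$ with $X^\varepsilon\in K_M$ on $E_\varepsilon$.

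Taking $V_0$-norms in the mild formulation and using $\|e^{tA}x_0\|_{V_0}\le\|x_0\|_{V_0}$, \autoref{lem:boundconv} with $\beta=\delta/2$, the linear growth of $B$, and $\|\cdot\|_H\le\lambda_0^{-\delta/2}\|\cdot\|_{V_0}$, one gets on $E_\varepsilon$, for all $t\in[0,T]$,
\begin{equation*}
\|X^\varepsilon_t\|_{V_0}\le\big(\|x_0\|_{V_0}+c_1+\rho\big)+c_2\int_0^t\frac{\|X^\varepsilon_s\|_{V_0}}{(t-s)^{\delta/2}}\,ds,
\end{equation*}
with $c_1,c_2$ depending only on $a,b,T,A,\delta$. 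Since $\delta/2<1$ the kernel is integrable, and a Gronwall argument adapted to weakly singular kernels yields $\|X^\varepsilon\|_{C([0,T],V_0)}\le R_1$ on $E_\varepsilon$, with $R_1=R_1(M)$ independent of $\varepsilon$; in particular $\sup_{s\le T}\|B(X^\varepsilon_s)\|_H\le R_2\coloneqq a+b\lambda_0^{-\delta/2}R_1$ on $E_\varepsilon$.

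For equicontinuity in $V$, decompose for $0\le s\le t\le T$
\begin{equation*}
X^\varepsilon_t-X^\varepsilon_s=(e^{tA}-e^{sA})x_0+\int_s^t e^{(t-r)A}B(X^\varepsilon_r)\,dr+\int_0^s e^{(s-r)A}\big(e^{(t-s)A}-I\big)B(X^\varepsilon_r)\,dr+\varepsilon\big(W_A(t)-W_A(s)\big)
\end{equation*}
and estimate the four terms in the $V$-norm: the first is $\le\|(e^{(t-s)A}-I)(-A)^{\alpha-\delta/2}\|_{\mathcal{L}(H)}\|x_0\|_{V_0}\le C|t-s|^{\delta/2-\alpha}\|x_0\|_{V_0}$; the second, by \autoref{lem:boundconv}, is $\le C_\alpha R_2|t-s|^{1-\alpha}$ on $E_\varepsilon$; the third, splitting $(-A)^\alpha e^{(s-r)A}(e^{(t-s)A}-I)=\big[(-A)^{\alpha+\theta}e^{(s-r)A}\big]\big[(-A)^{-\theta}(e^{(t-s)A}-I)\big]$ with a fixed $\theta\in(0,1-\alpha)$ and using $\|(-A)^{-\theta}(e^{hA}-I)\|_{\mathcal{L}(H)}\le Ch^\theta$, is $\le C_{\alpha,\theta}R_2|t-s|^{\theta}$ on $E_\varepsilon$; and the fourth is $\le\omega(|t-s|)$ on $E_\varepsilon$. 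Hence on $E_\varepsilon$ every trajectory $X^\varepsilon$ has a modulus of continuity in $V$ bounded by an explicit $\omega^*=\omega^*_M$ with $\omega^*(\eta)\to0$ as $\eta\to0$, uniformly in $\varepsilon$. Letting $K_M$ be the closure in $C([0,T],V)$ of $\{f:\|f\|_{C([0,T],V_0)}\le R_1,\ \sup_{|t-s|\le\eta}\|f_t-f_s\|_V\le\omega^*(\eta)\ \text{for all }\eta>0\}$, the Ascoli--Arzelà criterion shows $K_M$ is compact (pointwise relative compactness in $V$ comes from the $V_0$-bound together with the compact embedding $V_0\hookrightarrow V$; equicontinuity is built in), and $X^\varepsilon\in K_M$ on $E_\varepsilon$, so $\limsup_{\varepsilon\to0}\varepsilon^2\log\Probe{X^\varepsilon\notin K_M}\le\limsup_{\varepsilon\to0}\varepsilon^2\log\Probe{E_\varepsilon^c}\le-M$. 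The only genuinely new point compared to \autoref{lem:exptight} is the a priori estimate: since $B$ is unbounded the naive bound does not close, and the linear-growth term must be absorbed through the singular Gronwall inequality for the kernel $(t-s)^{-\delta/2}$; the equicontinuity step is longer but routine once the splitting with the small exponent $\theta$ is used to keep all convolution kernels integrable.
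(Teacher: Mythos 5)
Your argument is correct, and its central new ingredient is exactly the paper's: take the mild formulation in the $V_0$-norm, use \autoref{lem:boundconv} with $\beta=\delta/2$ and the linear growth of $B$, and close the estimate with a Gronwall inequality for the weakly singular kernel $(t-s)^{-\delta/2}$ (the paper invokes \cite{Ye07} for precisely this step), then control the stochastic convolution through \autoref{lem:fernique}. Where you diverge is in how the $V_0$-bound is converted into compactness in $C([0,T],V)$: the paper simply concludes ``the thesis follows by \autoref{lem:fernique}'', i.e.\ it reuses the same transfer mechanism as in \autoref{lem:exptight} and \autoref{rmk:ICP}, resting on the compact embedding $V_0\hookrightarrow V$ alone, whereas you additionally establish a uniform modulus of continuity in $V$ on the good event (via the splitting $e^{(t-r)A}-e^{(s-r)A}=e^{(s-r)A}(e^{(t-s)A}-I)$, the interpolation bound $\|(-A)^{-\theta}(e^{hA}-I)\|_{\mathcal{L}(H)}\leq Ch^{\theta}$, and the equicontinuity of the compact set $\widetilde K$ furnished by \autoref{rmk:ICP}) and then build the compact $K_M$ by Ascoli--Arzel\`a. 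This extra step is not cosmetic: a bounded set of $C([0,T],V_0)$ is not by itself relatively compact in $C([0,T],V)$ without equicontinuity, so your version makes explicit a point the paper leaves implicit, at the cost of the additional (routine) analytic-semigroup estimates and of quoting the exponential tightness of $(\varepsilon W_A)$ in $C([0,T],V)$; the paper's version buys brevity and stays entirely at the level of the pointwise $V_0$-estimate.
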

\begin{proof}
Arguing as in the proof of \autoref{lem:exptight}, one has
\begin{align*}
\|X^\varepsilon_t\|_{V_0} &\leq \|x_0\|_{V_0} + \int_0^t \left\| e^{(t-s)A}B(X^\varepsilon_s) \right\|_{V_0} ds +  \|\varepsilon W_A(t)\|_{V_0} \\
&= \|x_0\|_{V_0} + \int_0^t \left\| (-A)^{\delta/2}e^{(t-s)A}B(X^\varepsilon_s) \right\|_{H} ds +\|\varepsilon W_A(t)\|_{V_0} ,
\end{align*}
and by \autoref{lem:boundconv} the integral can be estimated with
\begin{align*}
\int_0^t \left\| (-A)^{\delta/2}e^{(t-s)A}B(X^\varepsilon_s) \right\|_{H} ds &\leq \int_0^t \frac{C_\delta}{(t-s)^{\delta/2}} \left\| B(X^\varepsilon_s) \right\|_{H} ds \\
&\leq  \int_0^t \frac{C_\delta}{(t-s)^{\delta/2}} \left( a +b\|X^\varepsilon_s\|_H \right) ds.
\end{align*}

Hence we have for some constant $C=C_{A,B,T}$ the inequality
\begin{align*}
\|X^\varepsilon_t\|_{V_0} &\leq \|x_0\|_{V_0} + \|\varepsilon W_A(t)\|_{V_0} + C + \int_0^t \frac{C}{(t-s)^{\delta/2}} \|X^\varepsilon_s\|_{V_0} ds.
\end{align*}

By \cite[Theorem 1, Corollary 2]{Ye07} there exists another constant $C=C_{A,B,T}$ such that
\begin{align*}
\|X^\varepsilon_t\|_{V_0} &\leq C \left( \|x_0\|_{V_0} + \|\varepsilon W_A(t)\|_{V_0} + 1 \right),
\end{align*}
and the thesis follows by \autoref{lem:fernique}.
\end{proof}

\begin{lem}[Girsanov Formula] \label{lem:girs} \cite[Theorem 13]{DPFlPrRo13} Assume (H) and $B \in C(H,H)$ with at most linear growth.
Let $\tilde{\PP}_\varepsilon$ be the probability measure on $(\Omega_\varepsilon,(\mathcal{F}^\varepsilon_t))$ implicitly given by
\begin{equation*}
\PP_\varepsilon = \exp \left( \varepsilon^{-1} \int_0^T \langle B(X^\varepsilon_s),d\tilde{W}_s \rangle - \frac{\varepsilon^{-2}}{2}\int_0^T \|B(X^\varepsilon_s)\|^2_{H} ds \right) \tilde{\PP}_\varepsilon,
\end{equation*}
where $\tilde{W}_t \coloneqq W_t + \varepsilon^{-1} \int_0^t B(X^\varepsilon_s)ds$. Then $\mathcal{L}_{\tilde{\PP}_\varepsilon} (\tilde{W},X^\varepsilon) = \mathcal{L}_{{\PP}_\varepsilon} ({W, Z^\varepsilon})$, where $Z^\varepsilon_t = e^{t A}x_0 + \varepsilon W_A(t)$ is the stochastic convolution starting at $x_0$.
\end{lem}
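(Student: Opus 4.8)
The plan is to read this as a restatement of the weak-existence construction of \cite{DPFlPrRo13}: from there I would import the single genuinely probabilistic fact — that the Girsanov density is a true martingale and that $\tilde{W}$ becomes a cylindrical Wiener process under $\tilde{\PP}_\varepsilon$ — and then close the argument by a deterministic identification of stochastic convolutions followed by a push-forward. To begin I would unfold the density. Using $d\tilde{W}_s = dW_s + \varepsilon^{-1}B(X^\varepsilon_s)\,ds$ one checks that the relation defining $\tilde{\PP}_\varepsilon$ is equivalent to
\begin{equation*}
\tilde{\PP}_\varepsilon = \exp\left(-\varepsilon^{-1}\int_0^T\langle B(X^\varepsilon_s),dW_s\rangle - \frac{\varepsilon^{-2}}{2}\int_0^T\|B(X^\varepsilon_s)\|_H^2\,ds\right)\PP_\varepsilon,
\end{equation*}
where $\int_0^t\langle B(X^\varepsilon_s),dW_s\rangle = \sum_{n\in\N}\int_0^t\langle B(X^\varepsilon_s),e_n\rangle_H\,d\beta^n_s$ is a real continuous local martingale with quadratic variation $\int_0^t\|B(X^\varepsilon_s)\|_H^2\,ds$, since $B(X^\varepsilon_s)\in H$ almost surely.

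The hard part is the martingale property of this density, and I would not reprove it: by \cite[Theorem 13]{DPFlPrRo13} — which is exactly where the at most linear growth of $B$, together with moment bounds on $X^\varepsilon$ of the kind underlying \autoref{lem:exptight2}, is used — the above exponential local martingale is a uniformly integrable martingale on $[0,T]$, so $\tilde{\PP}_\varepsilon$ is a probability measure equivalent to $\PP_\varepsilon$, and Girsanov's theorem gives that $\tilde{W}_t = W_t + \varepsilon^{-1}\int_0^t B(X^\varepsilon_s)\,ds$ is a cylindrical Wiener process on $H$ with respect to $(\mathcal{F}^\varepsilon_t)$ under $\tilde{\PP}_\varepsilon$.

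Next I would identify the stochastic convolution of $\varepsilon\tilde{W}$. By linearity of the It\^o integral and because $s\mapsto\varepsilon^{-1}\int_0^s B(X^\varepsilon_r)\,dr$ has finite-variation trajectories,
\begin{equation*}
\varepsilon\int_0^t e^{(t-s)A}\,d\tilde{W}_s = \varepsilon W_A(t) + \int_0^t e^{(t-s)A}B(X^\varepsilon_s)\,ds,
\end{equation*}
so comparing with the mild equation $X^\varepsilon_t = e^{tA}x_0 + \int_0^t e^{(t-s)A}B(X^\varepsilon_s)\,ds + \varepsilon W_A(t)$ yields $X^\varepsilon_t = e^{tA}x_0 + \varepsilon\int_0^t e^{(t-s)A}\,d\tilde{W}_s$ for every $t\in[0,T]$, $\tilde{\PP}_\varepsilon$-almost surely. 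Writing $Z(w)_t \coloneqq e^{tA}x_0 + \varepsilon\int_0^t e^{(t-s)A}\,dw_s$ for the stochastic convolution started at $x_0$, this says that under $\tilde{\PP}_\varepsilon$ the pair $(\tilde{W},X^\varepsilon)$ coincides with $(\tilde{W},Z(\tilde{W}))$, whereas under $\PP_\varepsilon$ one has by definition $Z^\varepsilon = Z(W)$, i.e.\ $(W,Z^\varepsilon) = (W,Z(W))$.

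To conclude I would invoke a push-forward. Both $W$ under $\PP_\varepsilon$ and $\tilde{W}$ under $\tilde{\PP}_\varepsilon$ are cylindrical Wiener processes on $H$, hence have the same law as random variables in $C([0,T],E)$; and $w\mapsto(w,Z(w))$ is realized by the same measurable map of Wiener space in both cases, the stochastic convolution being an almost sure limit of Riemann-type sums, which are genuine functionals of the path — so the joint law of a cylindrical Wiener process and its stochastic convolution is determined by the law of the process alone. Pushing the common law of the driving noise forward through $w\mapsto(w,Z(w))$ then gives $\mathcal{L}_{\tilde{\PP}_\varepsilon}(\tilde{W},X^\varepsilon) = \mathcal{L}_{\PP_\varepsilon}(W,Z^\varepsilon)$, which is the claim. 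The one delicate ingredient is, as already noted, the martingale property of the Girsanov density for a merely linearly growing and cylindrically driven drift; everything downstream of it is elementary bookkeeping.
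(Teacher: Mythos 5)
Your proposal is correct and matches the paper's treatment: the paper states this lemma as an import from \cite[Theorem 13]{DPFlPrRo13} without giving its own proof, and you likewise defer the only delicate point (the true-martingale property of the Girsanov density for a merely linearly growing, cylindrically driven drift) to that same reference. The bookkeeping you add --- rewriting the density in terms of $W$, identifying $X^\varepsilon_t = e^{tA}x_0 + \varepsilon\int_0^t e^{(t-s)A}d\tilde{W}_s$ under $\tilde{\PP}_\varepsilon$, and transferring the joint law through the measurable solution map of the linear equation --- is accurate and consistent with what the cited theorem provides.
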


\begin{lem}[Exponential trick] \label{lem:trick}
Let $W$ be a cylindrical Wiener process on $H$, defined on some filtered probability space $(\Omega,(\mathcal{F}_t),\PP)$ and let $Y \in L^{\infty}(\Omega,L^{\infty}([0,T],H))$ be a progressively measurable process. Then for every $c>0$ we have
\begin{align*}
\Prob{ \int_0^T \langle Y_s , dW_s \rangle > c } \leq \exp \left( - \frac{c^2}{2\|Y\|_{\infty}^2 T}\right).
\end{align*}
\end{lem}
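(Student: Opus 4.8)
The plan is the classical exponential-martingale (Chernoff) argument. First I would make sense of the real-valued integral $N_t \coloneqq \int_0^t \langle Y_s, dW_s\rangle$: writing $W_t = \sum_{n} \beta^n_t e_n$ as in (\ref{eq:W}), set $N_t = \sum_n \int_0^t \langle Y_s, e_n\rangle_H \, d\beta^n_s$, a series converging in $L^2(\Omega)$, uniformly in $t \in [0,T]$, because $\E{\int_0^T \|Y_s\|_H^2\, ds} \leq \|Y\|_\infty^2 T < \infty$ by progressive measurability and boundedness of $Y$. Then $N$ is a continuous square-integrable martingale with quadratic variation $\langle N\rangle_t = \int_0^t \|Y_s\|_H^2\, ds \leq \|Y\|_\infty^2 T$ almost surely.

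Next, for $\lambda > 0$ consider the stochastic exponential $\mathcal{E}_t \coloneqq \exp\bigl(\lambda N_t - \tfrac{\lambda^2}{2}\langle N\rangle_t\bigr)$. Since $\langle N\rangle_T \leq \|Y\|_\infty^2 T$ is deterministically bounded, Novikov's criterion is trivially satisfied, so $(\mathcal{E}_t)_{t \in [0,T]}$ is a genuine martingale with $\E{\mathcal{E}_T} = 1$. From the pointwise bound $\exp(\lambda N_T) = \mathcal{E}_T \exp\bigl(\tfrac{\lambda^2}{2}\langle N\rangle_T\bigr) \leq \mathcal{E}_T \exp\bigl(\tfrac{\lambda^2}{2}\|Y\|_\infty^2 T\bigr)$ together with Markov's inequality, one obtains for every $\lambda > 0$
\begin{equation*}
\Prob{N_T > c} = \Prob{e^{\lambda N_T} > e^{\lambda c}} \leq e^{-\lambda c}\, \E{e^{\lambda N_T}} \leq \exp\Bigl(-\lambda c + \tfrac{\lambda^2}{2}\|Y\|_\infty^2 T\Bigr).
\end{equation*}

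Finally I would optimize in $\lambda$: the exponent is minimized at $\lambda = c/(\|Y\|_\infty^2 T)$, which gives exactly $-c^2/(2\|Y\|_\infty^2 T)$, proving the claimed inequality. (If $\|Y\|_\infty = 0$ the statement is trivial, so this is harmless.) There is no serious obstacle here; the only point that deserves a line of care is the verification that $N$ and the stochastic exponential $\mathcal{E}$ are true martingales (not merely local ones), and this follows immediately from the uniform bound on $Y$, which makes Novikov's condition hold with no further integrability estimate.
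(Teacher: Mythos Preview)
Your proof is correct and follows essentially the same route as the paper: introduce the stochastic exponential, invoke Novikov's condition (trivially verified by the uniform bound on $Y$), apply Markov's inequality, and optimize over $\lambda$ to land on $\lambda = c/(\|Y\|_\infty^2 T)$. The only difference is that you spell out the construction of $N_t$ and its quadratic variation a bit more carefully, which is harmless.
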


\begin{proof}
Take a positive number $\lambda$ and rewrite the event we are interested in as
\begin{align*}
\left\{ \exp \left( \lambda \int_0^T \langle Y_s , dW_s \rangle  - \frac{\lambda^2}{2} \int_0^T \|Y_s\|_{H}^2 ds \right) > \exp \left( \lambda c - \frac{\lambda^2}{2}\int_0^T \|Y_s\|_{H}^2 ds \right) \right\}, 
\end{align*}
so that the desired probability is less or equal to the probability
\begin{align*}
\PP \left( \exp \left( \lambda \int_0^T \langle Y_s , dW_s \rangle  - \frac{\lambda^2}{2} \int_0^T \|Y_s\|^2 ds \right) > \exp \left( \lambda c - \frac{\lambda^2}{2} \|Y\|_{\infty}^2 T \right) \right).
\end{align*}

Since $\lambda Y$ is bounded it satisfies the Novikov's condition, therefore the LHS of the expression above is the value of a martingale at time $T$ and thus its expected value is equal to $1$. Markov inequality gives
\begin{align*}
\PP \left(  \int_0^T \langle Y_s , dW_s \rangle  > c \right) \leq \exp \left( - \lambda c + \frac{\lambda^2}{2} \|Y\|_{\infty}^2 T \right),
\end{align*} 
and choosing $\lambda = c/\|Y\|_{\infty}^2 T$ the thesis follows.
\end{proof}

\subsection{Proof of \autoref{thm:LDP}}

\begin{proof}[Proof of the lower bound]
We check lower bound in the formulation (\ref{eq:LowerBound'}). Fix $\delta>0$, $\gamma>0$ and $\varphi \in C([0,T],V)$.
Denote $U_\delta$ the open ball of radius $\delta$ centered in $\varphi$ with respect to the distance of $C([0,T],V)$.
By \autoref{lem:girs} we have the following identity
\begin{gather*}
\PP_\varepsilon \left( X^\varepsilon \in U_\delta \right)  = \tilde{\mathbb{E}}_\varepsilon \left[ \mathbf{1}_{\{X^\varepsilon \in U_\delta\}} \exp \left( \varepsilon^{-1} \int_0^T \langle B(X^\varepsilon_s),d\tilde{W}_s \rangle - \frac{\varepsilon^{-2}}{2}\int_0^T \|B(X^\varepsilon_s)\|^2_{H} ds \right) \right],
\end{gather*}
where $\tilde{\mathbb{E}}_\varepsilon$ stands for the expectation with respect to the probability $\tilde{\mathbb{P}}_\varepsilon$; a similar formula holds for $X^{\varepsilon,R}$. To ease notation, denote
\begin{gather*}
\xi_T \coloneqq \varepsilon^{-1} \int_0^T \langle B(Z^\varepsilon_s),dW_s \rangle - \frac{\varepsilon^{-2}}{2}\int_0^T \|B(Z^\varepsilon_s)\|^2_{H} ds, \\
\xi_T^R \coloneqq \varepsilon^{-1} \int_0^T \langle B_R(Z^\varepsilon_s),dW_s \rangle - \frac{\varepsilon^{-2}}{2}\int_0^T \|B_R(Z^\varepsilon_s)\|^2_{H} ds,
\end{gather*}
so that we have the following identities
\begin{gather*}
\PP_\varepsilon \left( X^\varepsilon \in U_\delta \right) = \Ee{\mathbf{1}_{\{Z^\varepsilon \in U_\delta \}} e^{\xi_T}}, \quad \PP_\varepsilon \left( X^{\varepsilon,R} \in U_\delta \right) = \Ee{\mathbf{1}_{\{Z^\varepsilon \in U_\delta \}} e^{\xi_T^R}}.
\end{gather*}

Now introduce the auxiliary sets, depending of parameters $\alpha>0$ and integer $R \geq 1$:
\begin{gather*}
E_\alpha \coloneqq \{ |\xi_T - \xi_T^R| > \alpha\},\quad G_R \coloneqq \{ \| Z^\varepsilon \|_{C([0,T],V)} \leq R-1 \}.
\end{gather*}

A simple computation (we refer to \cite[Proposition I.14]{He01} for the details omitted here) yields
\begin{gather*}
\Probe{X^\varepsilon \in U_\delta} \geq \Probe{X^{\varepsilon,R} \in U_\delta} e^{-\alpha} - \Probe{E_\alpha^c}^{1/2}\Ee{e^{2\xi_T^R}}^{1/2}.
\end{gather*}

Now take $\alpha = \varepsilon^{-2}\gamma$. Our next step is to prove that the second summand in the RHS above does not play any role in Large Deviations for the law of $X^\varepsilon$, namely it can be absorbed into the $\gamma$ when checking (\ref{eq:LowerBound'}). Let us estimate first the expected value
\begin{gather*}
\Ee{e^{2\xi_T^R}}^{1/2} = \Ee{\exp \left( 2\varepsilon^{-1} \int_0^T \langle B_R(Z^\varepsilon_s),d{W}_s \rangle - \varepsilon^{-2}\int_0^T \|B_R(Z^\varepsilon_s)\|^2_{H} ds \right) }^{1/2}.
\end{gather*}
Since $B_R$ is bounded, Novikov condition applies and therefore
\begin{gather*}
\Ee{\exp \left( 2\varepsilon^{-1} \int_0^T \langle B_R(Z^\varepsilon_s),d{W}_s \rangle - 2\varepsilon^{-2}\int_0^T \|B_R(Z^\varepsilon_s)\|^2_{H} ds \right) }^{1/2} =1.
\end{gather*}
We deduce the following bound
\begin{gather*}
\Ee{e^{2\xi_T^R}}^{1/2} \leq  \exp \left( \frac{\varepsilon^{-2}T(a+\lambda_0^{-\alpha}bR+1)^2}{2} \right) .
\end{gather*}

Regarding the other term, in general one has the inequality
\begin{gather*}
\Probe{E_{\varepsilon^{-2}\gamma}^c}^{1/2} \leq \Probe{E_{\varepsilon^{-2}\gamma}^c \cap G_R}^{1/2} + \Probe{E_{\varepsilon^{-2}\gamma}^c\cap G_R^c}^{1/2}. 
\end{gather*}
The second summand is easily controlled with \autoref{lem:fernique} 
\begin{gather*}
 \Probe{E_{\varepsilon^{-2}\gamma}^c\cap G_R^c} \leq \Probe{G_R^c}    \leq C \exp\left(- c \varepsilon^{-2} (R-1-\|x_0\|_{V_0})^2 \right).
\end{gather*}
To estimate the other term, notice that on $G_R$ we have $Z^\varepsilon_s \in F_{R-1} \subseteq F_R$ for every $s \in [0,T]$ and thus
\begin{align*}
\|B(Z^\varepsilon_s)\|^2_{H} - \|B_R(Z^\varepsilon_s)\|^2_{H} 
&= 
(\|B(Z^\varepsilon_s)\|_{H} + \|B_R(Z^\varepsilon_s)\|_{H})
(\|B(Z^\varepsilon_s)\|_{H} - \|B_R(Z^\varepsilon_s)\|_{H}) 
\\
&\leq 
2(a+\lambda_0^{-\alpha}bR+1)(\|B(Z^\varepsilon_s)\|_{H} - \|B_R(Z^\varepsilon_s)\|_{H})
\\
&\leq 
2(a+\lambda_0^{-\alpha}bR+1)\|B(Z^\varepsilon_s)-B_R(Z^\varepsilon_s)\|_{H}
\\
&\leq
\frac{2(a+\lambda_0^{-\alpha}bR+1)C}{R^2}.
\end{align*}
In particular, on $G_R$ we have
\begin{gather*}
\left| \frac{\varepsilon^{-2}}{2}\int_0^T \|B(Z^\varepsilon_s)\|^2_{H} - \|B_R(Z^\varepsilon_s)\|^2_{H} \right| \leq \frac{\varepsilon^{-2} T C (a+\lambda_0^{-\alpha}bR+1)}{R^2},
\end{gather*}
and therefore we can control the probability of the event $E_{\varepsilon^{-2}\gamma}^c \cap G_R$ for $R$ such that ${ 2 T C (a+\lambda_0^{-\alpha}bR+1)} < {\gamma R^2}$ simply with
\begin{gather} \label{eq:cap}
 \Probe{E_{\varepsilon^{-2}\gamma}^c \cap G_R} \leq \Probe{\left\{\left| \varepsilon^{-1}\int_0^T \langle B(Z^\varepsilon_s) -B_R(Z^\varepsilon_s) ,dW_s \rangle \right| > \frac{\varepsilon^{-2}\gamma}{2} \right\}  \cap G_R}.
\end{gather}
Let us now estimate the probability of the latter event.
Let $\rho:[0,\infty) \to [0,1]$ be a smooth cut-off function such that $\rho(r)=1$ if $r \leq R-1$, and $\rho(r)=0$ if $r \geq R$ (to avoid any confusion, let us point our that this $\rho$ is different from the cut-off used in \autoref{thm:approx}). 
The function $\rho(\|x\|_V) (B(x) -B_R(x))$, $x \in H$, coincides with $B(x) -B_R(x)$ for every $x \in F_{R-1}$, and by construction it is globally bounded in $H$ by the constant $C/R^2$.
As a consequence, on the set $G_R$ the process $B(Z^\varepsilon_\cdot) -B_R(Z^\varepsilon_\cdot)$ coincides with the process $\rho(\|Z^\varepsilon_\cdot\|_V)(B(Z^\varepsilon_\cdot) -B_R(Z^\varepsilon_\cdot))$, which has the advantage of being a progressively measurable process in $L^\infty(\Omega,L^\infty([0,T],H))$, so that \autoref{lem:trick} applies. Substituting into (\ref{eq:cap}), we obtain
 \begin{align*}
\Probe{E_{\varepsilon^{-2}\gamma}^c \cap G_R} 
&\leq 
\Probe{\left| \varepsilon^{-1}\int_0^T  \langle \rho(\|Z^\varepsilon_s\|_V)(B(Z^\varepsilon_s) -B_R(Z^\varepsilon_s)) ,dW_s \rangle \right| > \frac{\varepsilon^{-2}\gamma}{2} }
\\
&\leq 
2 \exp \left( -\frac{\varepsilon^{-2}\gamma^2 R^4}{8C^2T}\right).
\end{align*}

Putting all together we obtain uniformly in $\varepsilon, R$, with ${ 2 T C (a+\lambda_0^{-\alpha}bR+1)} < {\gamma R^2}$:
\begin{align*}
\Probe{X^\varepsilon \in U_\delta} &\geq \Probe{X^{\varepsilon,R} \in U_\delta} e^{-\varepsilon^{-2} \gamma} \\
&- C \exp\left(\frac{\varepsilon^{-2}T(a+\lambda_0^{-\alpha}bR+1)^2}{2} - \frac{c \varepsilon^{-2} (R-\|x_0\|_{V_0})^2}{2}  \right) \\
&-2 \exp \left( \frac{\varepsilon^{-2}T(a+\lambda_0^{-\alpha}bR+1)^2}{2} -\frac{\varepsilon^{-2}\gamma^2 R^4}{16C^2T}\right).
\end{align*}

Now we fix $R=R_{\gamma,\varphi}$ large enough such that the following inequalities hold
\begin{gather*}
{ 2 T C (a+\lambda_0^{-\alpha}bR+1)} < {\gamma R^2}, \quad S_{B_R}(\varphi) < S_{B}(\varphi) + \gamma, \\
\frac{T(a+\lambda_0^{-\alpha}bR+1)^2}{2} -\frac{c(R-\|x_0\|_{V_0})^2}{2}  < -S_{B}(\varphi) - 5\gamma,\\
\frac{T(a+\lambda_0^{-\alpha}bR+1)^2}{2} - \frac{\gamma^2 R^4}{16C^2T} < -S_{B}(\varphi) - 5\gamma.
\end{gather*}

Notice that here arises the condition on $T$, since the third inequality can be satisfied only for $T<c\lambda_0^\alpha/b^2$ (recall that the constant $c$ is non increasing in $T$, hence it is always possible to find such a $T$). However, in the case of $B\in C_b(H,H)$, this additional condition does not appear, since one can take $b=0$ and the third inequality can be always satisfied for $R$ large enough. With this choice of $R$ and using the fact that $\mathcal{L}( X^{\varepsilon,R})$ satisfies a LDP, we finally get for every $\varepsilon<\varepsilon_0$
\begin{align*}
\Probe{X^\varepsilon \in U_\delta} &\geq \exp \left( -\varepsilon^{-2} [S_{B}(\varphi) + 3\gamma]\right) 
- (2+C) \exp \left( -\varepsilon^{-2} [S_{B}(\varphi) + 5\gamma]\right) \\
&\geq \exp \left( -\varepsilon^{-2} [S_{B}(\varphi) + 4\gamma]\right),
\end{align*}
where $\varepsilon_0$ is chosen in such a way that both $\exp \left( -\varepsilon^{-2} 3\gamma\right) \geq 2 \exp \left( -\varepsilon^{-2} 4\gamma\right)$ and $(2+C) \exp \left( -\varepsilon^{-2} 5\gamma\right) \leq \exp \left( -\varepsilon^{-2} 4\gamma\right)$ for every $\varepsilon<\varepsilon_0$.
\end{proof}

\begin{prop} \label{prop:good}
Assume {(H)} and $B\in C(H,H)$ with at most linear growth. Then $S_B$ is an action functional on $C([0,T],V)$. 
\end{prop}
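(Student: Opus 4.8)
The goal is to verify the two defining properties of an action functional (see the discussion preceding \autoref{def:LDP}): that $S_B$ takes values in $[0,\infty]$, which is immediate from formula (\ref{eq:action2}), and that the sublevel $\{S_B\le M\}$ is compact in $C([0,T],V)$ for every $M<\infty$. For the latter the plan is to combine the lower bound already proved above — which was established in the equivalent formulation (\ref{eq:LowerBound'}), hence holds in the form (\ref{eq:LowerBound}) for the family $(\mathcal{L}(X^\varepsilon))_{\varepsilon>0}$ on $C([0,T],V)$ with functional $S_B$ — with the exponential tightness of \autoref{lem:exptight2}, after first checking that $S_B$ is lower semicontinuous on $C([0,T],V)$.

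\emph{Lower semicontinuity of $S_B$.} Write $S_B=S\circ\Psi$ as in (\ref{eq:action}), where $\Psi(\varphi)_t\coloneqq\varphi_t-e^{tA}x_0-\int_0^t e^{(t-s)A}B(\varphi_s)\,ds$ and $S=\sup_N S_N\circ\pi_N$ is the functional (\ref{eq:actionWA}). It suffices to show (a) that $\Psi$ maps $C([0,T],V)$ continuously into itself, and (b) that each $\psi\mapsto S_N(\pi_N\psi)$ is lower semicontinuous on $C([0,T],V)$; then $S$ is lower semicontinuous as a supremum of lower semicontinuous functions, and so is $S_B=S\circ\Psi$. For (a): if $\varphi^k\to\varphi$ in $C([0,T],V)$, then also in $C([0,T],H)$, and $\bigcup_k\varphi^k([0,T])\cup\varphi([0,T])$ is relatively compact in $H$ (a continuous image of a compact set), so $B$ is uniformly continuous there and $\sup_{s}\|B(\varphi^k_s)-B(\varphi_s)\|_H\to0$; since $\alpha<1$, \autoref{lem:boundconv} shows that $h\mapsto\int_0^\cdot e^{(\cdot-s)A}h(s)\,ds$ maps $C([0,T],H)$ boundedly into $C([0,T],V)$, whence $\Psi(\varphi^k)\to\Psi(\varphi)$ in $C([0,T],V)$. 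For (b): if $\pi_N\psi^k\to\pi_N\psi$ in $C([0,T],\pi_N H)$, we may assume (passing to a subsequence realizing the liminf, which we take finite) that $\sup_k S_N(\pi_N\psi^k)<\infty$; then $\frac{d}{dt}(\pi_N\psi^k)$ is bounded in $L^2([0,T],\pi_N H)$ and, along a further subsequence, converges weakly, necessarily to $\frac{d}{dt}(\pi_N\psi)$, while $A\pi_N\psi^k\to A\pi_N\psi$ uniformly; weak lower semicontinuity of the $L^2$-norm then gives $S_N(\pi_N\psi)\le\liminf_k S_N(\pi_N\psi^k)$.

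\emph{Containment in compacts and conclusion.} Fix $M<\infty$ and pick any $M'>M$. By \autoref{lem:exptight2} there is a compact $K\subseteq C([0,T],V)$ with $\limsup_{\varepsilon\to0}\varepsilon^2\log\Probe{X^\varepsilon\in K^c}\le-M'$. If $\varphi\notin K$, then $K^c$ is an open neighbourhood of $\varphi$, and the lower bound (\ref{eq:LowerBound}) applied to $K^c$ yields
\[
-S_B(\varphi)\le-\inf_{\psi\in K^c}S_B(\psi)\le\liminf_{\varepsilon\to0}\varepsilon^2\log\Probe{X^\varepsilon\in K^c}\le-M',
\]
so $S_B(\varphi)\ge M'>M$; hence $\{S_B\le M\}\subseteq K$. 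Being closed (by the lower semicontinuity just proved) and contained in the compact set $K$, the sublevel $\{S_B\le M\}$ is compact. As $M<\infty$ was arbitrary, $S_B$ has compact sublevels and is an action functional on $C([0,T],V)$.

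The main obstacle is the lower semicontinuity step. Because $B$ is only continuous with linear growth, the map $\Psi$ need not be invertible, so — unlike in the Lipschitz case treated earlier — one cannot infer lower semicontinuity of $S_B$ from continuity of $\Psi^{-1}$, and it must instead be extracted from the representation $S=\sup_N S_N\circ\pi_N$ (where $A\pi_N$ is a bounded operator) together with the smoothing bound of \autoref{lem:boundconv}. The containment-in-compacts step and the conclusion are then a routine consequence of the lower bound and exponential tightness.
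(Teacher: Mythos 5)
Your proof is correct, and the compactness half (exponential tightness from \autoref{lem:exptight2} plus the already-proved lower bound, giving $\{S_B\le M\}\subseteq K_{M'}$, then closedness inside a compact) is essentially the paper's argument. Where you genuinely diverge is the closedness/lower-semicontinuity step: the paper never proves lower semicontinuity of $S_B$ directly, but compares $S_B$ with the approximating functionals $S_{B_R}$ on a bounded set of $C([0,T],V)$ containing the convergent sequence, using \autoref{thm:approx} together with the fact that each $S_{B_R}$ is already known to be an action functional (hence lower semicontinuous) from the Lipschitz theory, and then lets $R\to\infty$. You instead work straight from the representation $S_B=S\circ\Psi$ with $S=\sup_N S_N\circ\pi_N$ as in (\ref{eq:action})--(\ref{eq:actionWA}): continuity of $\Psi(\varphi)=\varphi-e^{\cdot A}x_0-\int_0^\cdot e^{(\cdot-s)A}B(\varphi_s)\,ds$ (via compactness of $V\hookrightarrow H$, uniform continuity of $B$ on compacts and \autoref{lem:boundconv}) and lower semicontinuity of each $S_N\circ\pi_N$ (weak $L^2$ compactness of the derivatives, with the boundary condition $\pi_N\psi(0)=0$ passing to the limit by uniform convergence, a point you should state explicitly). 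Your route is more self-contained: it does not use \autoref{thm:approx} at all in this proposition, and it sidesteps the paper's additive comparison $S_B(\varphi)\le S_{B_R}(\varphi)+C/R^2$, which, since the functionals are quadratic, really should be run through the $L^2$ triangle inequality, $\sqrt{2S_B(\varphi)}\le\sqrt{2S_{B_R}(\varphi)}+\sqrt{T}\,C/R^2$, before being turned into an additive bound; the paper's route, in exchange, is shorter given the machinery already in place and reuses the approximation scheme that drives the rest of the proof of \autoref{thm:LDP}. One cosmetic remark: when you invoke the open-set lower bound (\ref{eq:LowerBound}) on $K^c$, note (as you do in passing) that it follows from the pointwise formulation (\ref{eq:LowerBound'}) actually proved in the paper; the paper applies (\ref{eq:LowerBound'}) to a small ball $U_\delta\subseteq K_M^c$ instead, which is the same argument.
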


\begin{proof}
Closedness of the sublevels of $S_B$ is easy: indeed, take a sequence $\varphi^n \to \varphi^\infty$ in $C([0,T],V)$. Since any converging sequence is bounded, there exists $\bar{R}$ such that $\sup_{t \in [0,T]}\|\varphi^\infty\|_V \leq \bar{R}$ and $\sup_{t \in [0,T]}\|\varphi^n\|_V \leq \bar{R}$ for every $n \in \mathbb{N}$.
As a consequence, by \autoref{thm:approx} there exists some constant $C$  such that for every $R \geq \bar{R}$ and every $\varphi = \varphi^n$ or $\varphi = \varphi^\infty$:
\begin{align*}
S_B(\varphi) \leq S_{B_R}(\varphi) + C/R^2, \quad  S_{B_R}(\varphi) \leq S_{B}(\varphi) + C/R^2. 
\end{align*}
Since $S_{B_R}$ is an action functional it is lower semicontinuous, hence 
\begin{equation*}
S_B(\varphi^\infty) \leq  S_{B_R}(\varphi^\infty) + C/R^2
\leq \liminf_{n \to \infty} S_{B_R}(\varphi^n) + C/R^2 \leq \liminf_{n \to \infty} S_{B}(\varphi^n) + 2C/R^2, 
\end{equation*}
and closedness follows from the arbitrarity of $R$.

For the compactness we consider $K_M$ given by the definition of exponential tightness; if $\varphi \in K_M^c$, then by lower bound (\ref{eq:LowerBound'}) for any $\delta>0$ small enough:
\begin{align*}
-S_B(\varphi) - \gamma &\leq 
\liminf_{\varepsilon \to 0} \varepsilon^2 \log \Probe{ X^\varepsilon \in U_\delta} 
\\
&\leq \liminf_{\varepsilon \to 0} \varepsilon^2 \log \Probe{  X^\varepsilon \in K_M^c} < -M, 
\end{align*}
that implies $S_B(\varphi) \geq M$ by arbitrarity of $\gamma$. This means that $\Phi(s) \subseteq K_M$ for every $M>s$. Being a closed subset of a compact set, $\Phi(s) $ is compact as well.
\end{proof}

\begin{rmk}
Under the stronger assumptions $B \in C_b(H,H)$ and $x_0 \in D((-A^{1/2}))$, we have a simplified argument for proving that $S_B$ is an action functional on $C([0,T],V)$.
Indeed, by Simon compactness criterion \cite[Corollary 9]{Si86}, the embedding 
\begin{align*}
W^{1,2}([0,T],H) \cap L^2([0,T],D(A)) \subseteq C([0,T],V)  
\end{align*}
is compact, $V$ being equal to $D((-A)^{\alpha})$ with $\alpha \in (0,1/2)$; therefore, it is sufficient to prove that sublevels of $S_B$ are bounded subsets of $W^{1,2}([0,T],H) \cap L^2([0,T],D(A))$.
In order to see this, let us invoke again inequality (\ref{eq:inequality_rmk}) from \autoref{rmk:domain}:
\begin{align*}
\int_0^T &\left\|\dot{\varphi}_t\right\|_H^2 dt
+
\int_0^T \left\|A \varphi_t\right\|_H^2 dt
\\
&\leq \nonumber
\sup_{N \in \mathbb{N}}
\int_0^T \left\|\frac{d}{dt} (\pi_N \varphi_t) - A\pi_N \varphi_t\right\|_H^2 dt
+
2 \left\|x_0\right\|_{D((-A)^{1/2})}^2
\\
&\leq \nonumber
2\sup_{N \in \mathbb{N}}
\int_0^T \left\|\frac{d}{dt} (\pi_N \varphi_t) - A\pi_N\varphi_t - \pi_N B(\varphi_t)\right\|_H^2 dt
+
2 T\|B\|^2_{\infty}
+
2 \left\|x_0\right\|_{D((-A)^{1/2})}^2
\\
&\leq 4 S_B(\varphi)
+
2 T\|B\|^2_{\infty}
+
2 \left\|x_0\right\|_{D((-A)^{1/2})}^2,
\end{align*}
where the last line comes from the formula
\begin{align*}
S_B(\varphi) 
=
\sup_{N \in \mathbb{N}} S_N(\pi_N \psi)
=
\sup_{N \in \mathbb{N}} \frac12 \int_0^T \left\|\frac{d}{dt} (\pi_N \varphi_t) - A\pi_N \varphi_t  - \pi_N B(\varphi_t) \right\|_H^2 dt,
\end{align*}
proved in \autoref{lem:domain}.
Hence, assuming $\|B\|_{\infty}< \infty$ and $x_0 \in D((-A)^{1/2})$, we have proved that sublevels of $S_B$ are bounded subsets of $W^{1,2}([0,T],H) \cap L^2([0,T],D(A))$, hence compact in $C([0,T],V)$.

\end{rmk}

\begin{proof}[Proof of the upper bound] 
By \cite[Lemma 1.2.18]{DeZe09} it is sufficient to check (\ref{eq:UpperBound}) for any given compact $K\subseteq C([0,T],V)$. Arguing as in the proof of lower bound, one obtains
\begin{gather*}
\Probe{X^\varepsilon \in K} \leq \Probe{X^{\varepsilon,R} \in K} e^{\alpha} + \Probe{E_\alpha^c}^{1/2}\Ee{e^{2\xi_T^R}}^{1/2}.
\end{gather*}

Taking $\alpha = \varepsilon^{-2} \gamma$ and $R$ sufficiently large such that ${ 2 T C (a+\lambda_0^{-\alpha}bR+1)} < {\gamma R^2}$ we obtain uniformly in $\varepsilon$:
\begin{align*}
\Probe{X^\varepsilon \in K} &\leq \Probe{X^{\varepsilon,R} \in K} e^{\varepsilon^{-2} \gamma} \\
&+ C \exp\left(\frac{\varepsilon^{-2}T(a+\lambda_0^{-\alpha}bR+1)^2}{2} - \frac{c \varepsilon^{-2} (R-\|x_0\|_{V_0})^2}{2}  \right) \\
&+2 \exp \left( \frac{\varepsilon^{-2}T(a+\lambda_0^{-\alpha}bR+1)^2}{2} -\frac{\varepsilon^{-2}\gamma^2 R^4}{16C^2T}\right).
\end{align*}

Let $s_K= \inf_{\varphi \in K}S_B(\varphi)$. The case $s_K=\infty$ is the easy one, since one can make RHS in the equation above arbitrarily small using that $\mathcal{L}(X^{\varepsilon,R})$ satisfies a LDP, hence suppose $s_K<\infty$. If $s_K=0$ there is nothing to prove, otherwise take $R=R_{\gamma,K}$ sufficiently large such that the following inequalities hold
\begin{gather*}
{ 2 T C (a+\lambda_0^{-\alpha}bR+1)} < {\gamma R^2},  \\
\frac{T(a+\lambda_0^{-\alpha}bR+1)^2}{2} -\frac{c(R-\|x_0\|_{V_0})^2}{2}  < -2s_K,\\
\frac{T(a+\lambda_0^{-\alpha}bR+1)^2}{2} - \frac{\gamma^2 R^4}{16C^2T} < -2s_K.
\end{gather*}

With this choice of $R$ we get for every $\varepsilon<\varepsilon_0$
\begin{align*}
\Probe{X^\varepsilon \in K} &\leq \Probe{X^{\varepsilon,R} \in K} e^{\varepsilon^{-2} \gamma} + (2+C) \exp\left(-\varepsilon^{-2}2s_K \right) \\
&\leq \Probe{X^{\varepsilon,R} \in K} e^{\varepsilon^{-2} \gamma} +\exp\left(-\varepsilon^{-2}s_K \right),
\end{align*}
where $\varepsilon_0$ is chosen in such a way that $(2+C) \exp \left( -\varepsilon^{-2} 2 s_K \right) \leq \exp \left( -\varepsilon^{-2} s_K \right)$ for every $\varepsilon<\varepsilon_0$.

Now we use the fact that $\mathcal{L}(X^{\varepsilon,R})$ satisfies a LDP to estimate
\begin{align} \label{eq:lims}
\limsup_{\varepsilon\to 0} \varepsilon^2 \log \Probe{X^\varepsilon \in K} &\leq \max 
\left\{ \gamma + \limsup_{\varepsilon\to 0} \varepsilon^2 \log \Probe{X^{\varepsilon,R}\in K}, -s_K \right\} \\
&\leq \max \nonumber
\left\{ \gamma - \inf_{\varphi \in K}S_{B_R}(\varphi), -s_K \right\}.
\end{align}

By lower semicontinuity of $S_{B_R}$ and compactness of $K$, the infimum is attained at a certain $\varphi^R \in K$, namely $\inf_{\varphi \in K}S_{B_R}(\varphi) = S_{B_R}(\varphi^R)$. By compactness, there exists a subsequence (which we still denote by $\varphi^R$) such that $\varphi^R $ converges in $C([0,T],V)$ to a certain $\varphi^\infty \in K$. Moreover, the whole sequence $\varphi^R$ is uniformly bounded in $C([0,T],V)$, hence
\begin{align*}
\sup_{\substack{R\geq 1,\\ t \in [0,T]}} \|\varphi^R_t\|_V \leq \overline{R},
\end{align*}
with $\overline{R}$ sufficiently large. By \autoref{prop:good} $S_B$ is lower semicontinuous and therefore
\begin{align*}
S_B(\varphi^\infty) &\leq \liminf_{R \to \infty} S_B(\varphi^{R}) \\ 
&= \liminf_{R \to \infty} \frac{1}{2} \int_0^T
\left\| \dot{\varphi}^{R}_t - A \varphi^{R}_t - B_R(\varphi^{R}_t) + B_R(\varphi^{R}_t) -B(\varphi^{R}_t) \right\|_H^2 dt\\
&\leq \liminf_{R \to \infty} \left( (1+c^2) S_{B_{R}}(\varphi^{R}) + \frac{1 + c^{-2}}{2}\int_0^T \|B_R(\varphi^R_t) - B(\varphi^R_t)\|^2_H dt \right) \\
&\leq \liminf_{R \to \infty} \left( (1+c^2) S_{B_{R}}(\varphi^{R}) + \frac{(1 + c^{-2})TC^2}{2R^4} \right) = (1+c^2)\liminf_{R \to \infty} \left(  S_{B_{R}}(\varphi^{R}) \right).
\end{align*}

Plugging this inequality into (\ref{eq:lims}) and
taking $c \to 0$, $\gamma \to 0$ we finally obtain 
\begin{equation*}
\limsup_{\varepsilon\to 0} \varepsilon^2 \log \Probe{X^\varepsilon \in K} \leq -s_K = -\inf_{\varphi \in K}S_B(\varphi).
\end{equation*}
\end{proof}

\section{Applications} \label{sec:applications}

In this section we briefly discuss some particular equation to which our general result applies.

\subsection{Degenerate operator $A$}
As pointed out in \cite[Remark 2]{DPFlPrRo15}, \autoref{thm:LDP} still holds under the more general assumption that there exists $\omega \in \mathbb{R}$ such that the operator $A-\omega Id$ satisfies (H). Indeed one can rewrite (\ref{eq:SDE}) in the form
\begin{equation*}
dX^{\varepsilon}_t = \left( AX^{\varepsilon}_t - \omega X^{\varepsilon}_t \right)dt + \left( \omega X^{\varepsilon}_t + B(X^{\varepsilon}_t) \right) dt + \varepsilon dW_t, \quad t \in [0,T].
\end{equation*}

\subsection{Fractional diffusion equation}
Consider the \emph{fractional diffusion equation} on the one-dimensional torus $\mathbb{T}$
\begin{equation*}
\frac{\partial u}{\partial t}(t,x) + (-\Delta)^{\sigma} u(t,x) = 0, \quad (t,x) \in [0,T] \times \mathbb{T}, \quad \sigma > 0.
\end{equation*}

This PDE arises naturally when considering, for instance, the limiting behaviour of Boltzmann equation, see \cite{MMM11,JKO09}. Notice that in the sub-case $\sigma > 1/2$ the equation above satisfies the hypotheses of our abstract setting with $H = L^2(\mathbb{T})$, $A = -(-\Delta)^{\sigma}$ with domain $D(A)=H^{2\sigma}(\mathbb{T})$ and $B=0$.
Indeed, for every $\omega>0$ the operator $-(-\Delta)^{\sigma}-\omega$ is self-adjoint, strictly negative and $((-\Delta)^{\sigma}+\omega)^{-1+\delta}$ is trace-class for every $\delta< 1 - \frac{1}{2\sigma}$.
Therefore, we deduce the validity of a LDP on the space $C([0,T],H^{\alpha}(\mathbb{T}))$, $\alpha<\sigma - 1/2$, for the perturbed equation
\begin{equation*}
\frac{\partial u}{\partial t}(t,x) + (-\Delta)^{\sigma} u(t,x) = \varepsilon \frac{\partial \eta}{\partial t}(t,x), \quad (t,x) \in [0,T] \times \mathbb{T}, \quad \sigma > 1/2,
\end{equation*}
where $\frac{\partial \eta}{\partial t}$ is a space-time white noise and the initial condition is $u_0 \in H^{\sigma-1/2}(\mathbb{T})$, with rate $\varepsilon^2$ and action functional
\begin{equation*}
S(u) = \frac{1}{2} \int_0^T  \int_\mathbb{T} \left| \frac{\partial u}{\partial t}(t,x) + (-\Delta)^\sigma u(t,x) \right|^2 dxdt, 
\end{equation*}
if $u \in W^{1,2}([\tau,T],L^2(\mathbb{T})) \cap L^2([\tau,T],H^{2\sigma}(\mathbb{T}))$ for every $\tau>0$, $u(0,x)=u_0(x)$ and $S(u) = +\infty$ otherwise.

\subsection{Nonlinear fractional diffusion equation}
One natural extension of the fractional diffusion equation above is clearly its nonlinear counterpart:
\begin{equation*}
\frac{\partial u}{\partial t}(t,x) + (-\Delta)^{\sigma} u(t,x) = b(u(t,x)).
\end{equation*}

An interesting choice of $b$ is, for example, $b(u)=|u|^{\gamma}$, $\gamma \in (0,1)$, so that, as far as we know, the nonlinear equation above does not fall into the scope of any previous work concerning Large Deviations (cfr. \cite{CW99,CeRo04} for the locally-Lipschitz case). Quite remarkably, uniqueness of strong solutions to the unperturbed equation above does not hold in general, but also in this case a LDP holds on $C([0,T],H^{\alpha}(\mathbb{T}))$, $\alpha<\sigma - 1/2$, $\sigma > 1/2$, for the perturbed equation on $[0,T] \times \mathbb{T}$
\begin{equation*}
\frac{\partial u}{\partial t}(t,x) + (-\Delta)^{\sigma} u(t,x) = |u(t,x)|^{\gamma} + \varepsilon \frac{\partial \eta}{\partial t}(t,x),
\end{equation*}
with rate $\varepsilon^2$ and action functional
\begin{equation*}
S(u) = \frac{1}{2} \int_0^T  \int_\mathbb{T} \left| \frac{\partial u}{\partial t}(t,x) + (-\Delta)^\sigma u(t,x) - |u(t,x)|^{\gamma} \right|^2 dxdt, 
\end{equation*}
if $u \in W^{1,2}([\tau,T],L^2(\mathbb{T})) \cap L^2([\tau,T],H^{2\sigma}(\mathbb{T}))$ for every $\tau>0$, $u(0,x)=u_0(x)$ and $S(u) = +\infty$ otherwise. Notice that the action functional vanishes at $u$ if and only if $u$ is a strong solution to the unperturbed equation, a phenomenon similar to \cite{Herr01,GHR01} and \cite{Ma10}. 

\section*{Declaration of interests}
The author declares that he has no known competing financial interests or personal relationships that could have appeared to influence the work reported in this paper.

\section*{Acknowledgement}
The author is deeply grateful to Franco Flandoli for the useful discussions and for some of the ideas here exposed, and to the anonymous referees for their careful reading of the first version of this paper.


\bibliographystyle{plain}

\end{document}